\numberwithin{equation}{section}
\newcommand{\C}{\mathbf{C}}
\newcommand{\F}{\mathcal{F}}
\newcommand{\f}{\mathbf{f}}
\newcommand{\supp}{\mathrm{supp}}
\newcommand{\bL}{{\mathbf{L}}}
\newcommand{\bK}{{\mathbf{K}}}
\renewcommand{\wr}{\mathop{\mathrm{wr}}}
\newcommand{\mG}{{\mathcal{G}}}
\newcommand{\mJ}{{\mathcal{J}}}
\newcommand{\vG}{\Gamma}
\newcommand{\MP}{{\mathcal{P}}}
\newcommand{\Alt}{{\mathrm{Alt}}}
\newcommand{\Sym}{{\mathrm{Sym}}}
\newcommand{\SOplus}{{\mathrm{SO}^+_{2m}}}
\newcommand{\SOminus}{{\mathrm{SO}^-_{2m}}}
\newcommand{\Fix}{{\mathrm{Fix}}}
\newcommand{\fix}{{\mathrm{fix}}}
\newcommand{\RelFix}{{\mathrm{RelFix}}}
\newcommand{\Aut}{{\mathrm{Aut}}}
\newcommand{\V}{{\mathrm{V}}}
\newcommand{\id}{{\mathrm{id}}}
\newcommand{\val}{\mathrm{val}}
\subjclass[2010]{05C25, 20B25}
\keywords{Vertex-primitive, fixity, product action, digraph, graph}
\begin{document}
	
	\newtheorem{thm}{Theorem}[]
	\newtheorem{lem}[thm]{Lemma}
	\newtheorem{cor}[thm]{Corollary}
	\theoremstyle{definition}
	\newtheorem{de}[thm]{Definition}
	\newtheorem{remark}[thm]{Remark}
	\newtheorem{con}[thm]{Construction}
	\newtheorem{question}[thm]{Question}

	\title{Vertex-primitive digraphs with large fixity}
	
	\author[M.~Barbieri]{Marco Barbieri}
	\address{Dipartimento di Matematica ``Felice Casorati", University of Pavia, Via Ferrata 5, 27100 Pavia, Italy} 
	\email{marco.barbieri07@universitadipavia.it}
	
	\author[P.~Poto\v{c}nik]{Primo\v{z} Poto\v{c}nik}
	\address{
	Faculty of Mathematics and Physics,
	University of Ljubljana, Slovenia; \newline
	also affiliated with\newline
	Institute of Mathematics, Physics, and Mechanics,
	Ljubljana, Slovenia}
	\email{primoz.potocnik@fmf.uni-lj.si}

	\begin{abstract}
		The relative fixity of a digraph $\Gamma$ is defined as the ratio between the largest number of vertices fixed by a nontrivial automorphism of $\Gamma$ and the number of vertices of $\Gamma$.
		We characterize the vertex-primitive digraphs whose relative fixity is at least $\frac{1}{3}$, and we show that there are only finitely many vertex-primitive graphs of bounded out-valency and relative fixity exceeding a positive constant.
	\end{abstract}

	\maketitle

\section{Introduction}
\label{sec:intro}

Throughout this paper, we use the word \emph{digraph} to denote a combinatorial structure $\vG$ determined by a finite nonempty set of {\em vertices} $V\vG$ and a set of {\em arcs}  $A\vG \subseteq V\vG\times V\vG$, sometimes also viewed as a binary relation on $V\vG$.
If the set $A\vG$ is symmetric (when viewed as a binary relation on $V\vG$),
then the digraph $\vG$ is called a {\em graph} and unordered pairs $\{u,v\}$ such that 
$(u,v)$
and $(v,u)$ are arcs
are called {\em edges} of $\vG$.

The {\em fixity} of a finite digraph $\vG$, denoted by $\Fix(\vG)$, is defined as the largest number of vertices that are left fixed by a nontrivial automorphism of $\vG$, while the {\em relative fixity of $\vG$} is defined as the ratio
\[\RelFix(\vG) = \frac{\Fix(\vG)} {|V\vG|} \,.\]

The notion of fixity of (di)graphs was introduced in a 2014 paper of L.~Babai~\cite{Babai1} (see also \cite{Babai2}),
where several deep results regarding the fixity of strongly regular graphs were proved (these results were later used in his work
on the graph isomorphism problem \cite{Babai3}). To convey the flavour of his work, let us mention \cite[Theorem 1.6]{Babai2},
which states that the relative fixity of a strongly regular graph (other then a complete bipartite graph or the line graph of a complete graph)
is at most $\frac{7}{8}$.

The study of the fixity of graphs continued in a series of papers \cite{BarbieriGrazianSpiga,LehnerPotocnikSpiga,PotocnikSpiga} 
by P.~Spiga and coauthors (including the authors of the present paper),
where the problem was studied in the context of vertex-transitive graphs of fixed valency.

Let us mention that fixity is a well studied parameter in the slightly more general context of permutation groups, where, instead of fixity, it is more common to consider the dual notion of {\em minimal degree} of a permutation group $G$, defined by
\[ \mu(G) = \min_{g\in G \setminus \{1_G\}} |\supp(g)| \,,\]
where $\supp(g)$ denotes the set of all non-fixed points of $g\in G$.
Note that the fixity of a digraph $\vG$ and the minimal degree of its automorphism group $\Aut(\vG)$ 
are related via the equality
\[\Fix(\vG) = |V(\vG)| - \mu(\Aut(\vG)) \,.\]

A vast majority of papers
on the topic of minimal degree of permutation groups
(including the original work of Jordan on primitive permutation groups of minimal degree $c$ for a fixed constant $c$)
concentrates on
{\em primitive permutation groups} (see, for example, \cite{Babai4,BurnessGuralnick,GuralnickMagaard,Liebeck,LiebeckSaxl,LiebeckShalev}).
It is thus natural to ask the following question:

\begin{question} 
	What can be said about a digraph with large relative fixity whose automorphism group acts primitively on the vertex-set?
\end{question}

In this paper, we answer this question in the setting where the relative fixity is more than $\frac{1}{3}$.
In our analysis, we rely heavily on the recent classification
of primitive permutation groups of minimal degree at most $\frac{2}{3}$ of the degree of the permutation group from \cite{BurnessGuralnick}.
The essence of our work thus consists of determining the digraphs upon which the permutation groups from this classification act upon.

Before stating our main result, let us first introduce a few graph theoretical concepts and constructions.
First, recall that the {\em direct product of the family of digraphs} $\vG_1, \ldots, \vG_r$ (sometimes also called the {\em tensor product} or the {\em categorical product})
is the digraph $\vG_1\times \ldots \times \vG_r$ whose vertex-set is the cartesian product  $V\vG_1\times \ldots \times V\vG_r$ and whose arc-set is
\[A(\vG_1\times \ldots \times \vG_r) = \left\{ \bigl( (u_1,\ldots,u_r ),\, (v_1,\ldots,v_r)\bigr) \,\middle\vert\, (u_i,v_i)\in A\vG_{i} \hbox{ for all } i \in \{1,\ldots,r\} \right\} \,.\]
Recall also that a {\em union of digraphs} $\vG_1$ and $\vG_2$ is the digraph whose vertex-set and arc-set are the sets $V\vG_1 \cup V\vG_2$ and
$A\vG_1 \cup A\vG_2$, respectively. Note that when $\vG_1$ and $\vG_2$ share the same vertex-set, their union is then obtained simply by taking the union of their arc-sets. Further, for a positive integer $m$, let $\bL_m$ and $\bK_m$ denote the {\em loop graph} and the {\em complete graph} on a vertex-set 
$V$ of cardinality $m$ and with arc-sets $\{(v,v) : v\in V\}$ and $\{(u,v) :  u,v\in V, u\not=v\}$, respectively.

We now have all the ingredients needed to present a construction yielding the digraph appearing in our main result.

\begin{con}
\label{con:general}
 	Let $\mG=\{\vG_0,\vG_1, \ldots, \vG_k\}$ be a list of $k+1$ pairwise distinct digraphs sharing the same vertex-set $\Delta$.
 	Without loss of generality, we shall always assume that $\vG_0 = \bL_m$ with $m = |\Delta|$. Further, let $r$ be a positive integer, and let $\mJ$ be a subset of the $r$-fold cartesian power $X^r$, where
 	$X=\{0,1,\ldots, k\}$.
 	Given this input, construct the digraph
 	\[\MP(r,\mG,\mJ) = \bigcup_{(j_1,j_2,\ldots,j_r) \in \mJ} \vG_{j_1} \times \vG_{j_2} \times \ldots \times \vG_{j_r}\]
 	and call it the {\em merged product action digraph}.
\end{con}

\begin{remark}
\label{rem:ex}
	We give some example to give a flavour of what can be obtained using Construction~\ref{con:general}.
	
	If $r=1$, then $\MP(1,\mG,\mJ)$ is simply the union of some digraphs from the set $\mG$.
	
	If $r=2$ and $\mJ=\{(1,0),(0,1)\}$, then $\MP(1,\mG,\mJ)=\bL_m \times \Gamma_1 \cup \Gamma_1 \times \bL_m$, which is, in fact, the {\em Cartesian product}	$\Gamma \square \Gamma$. (This product is sometimes called the {\em box product}, and we refer to \cite{ImrichKlavzar} for the definition of the Cartesian product.)
	
	More generally, 
	if $\mJ = \{ e_i \mid i \in \{1,\ldots,r\}\}$, where $e_i=(0,\ldots,0,1,0,\ldots,0)$ is the $r$-tuple with $1$ in the $i$-th component and zeroes elsewhere, 
	then $\MP(r,\mG,\mJ) = (\Gamma_1)^{\square r}$, the $r$-th Cartesian power of the graph $\Gamma_1\in \mG$. More specifically,
	if $\Gamma_1 = \bK_m$ and $\mJ$ is as above, then $\MP(r,\mG,\mJ)$ is the {\em Hamming graph} $\mathbf{H}(r,m) = \bK_m^{\square r}$.
\end{remark}
	
While $\mJ$ can be an arbitrary set of $r$-tuples in $X^r$, 
we will be mostly interested in the case where $\mJ \subseteq X^r$ is invariant under the induced action of some permutation group $H\le \Sym(r)$ on the set $X^r$ 
given by the rule
\[(j_1, j_2, \ldots, j_r)^h = (j_{1 h^{-1}},j_{2 h^{-1}}, \ldots, j_{r h^{-1}}) \,.\]
(Throughout this paper, in the indices, we choose to write $ih^{-1}$ instead of $i^{h^{-1}}$ for improved legibility.)
We shall say that $\mJ$ is an {\em $H$-invariant subset of $X^r$} in this case.
A subset $\mJ\subseteq X^r$ which is $H$-invariant for some {\em transitive} subgroup of $\Sym(r)$ will be called {\em homogeneous}.

The last example of Remark~\ref{rem:ex} justifies the introduction of the following new family of graphs.

\begin{de}
\label{de:genHam}
	Let $r,m$ be two positive integers, and let $\mJ\subseteq \{0,1\}^r$ be a homogeneous set. The graph $\MP \left( r, \{\bL_m,\bK_m\}, \mJ \right)$ is called \emph{generalised Hamming graph} and is denoted by $\mathbf{H}(r,m,\mJ)$.
\end{de}

\begin{remark}
\label{rem:orbHam}
	The generalised Hamming graphs $\mathbf{H}(r,m,\mJ)$, where $\mJ$ is $H$-invariant, are precisely the unions of orbital graphs for the group $\Sym(m) \wr H$ endowed with the product action (see Lemma~\ref{lem:Ham} for further details).
\end{remark}

Furthermore, a homogeneous set $\mJ$ is said to be \emph{Hamming} if,
\[\mJ = \bigcup\limits_{h\in H} \left( (X\setminus\{0\})^a\times X^b \times \{0\}^{r-a-b} \right)^h \,,\]
for some nonnegative integers $a,b$ such that $a+b\le r$ and a transitive group $H\le \Sym(r)$. It is said to be \emph{non-Hamming} otherwise.

\begin{remark}
\label{rem:isomHam}
	Let $\MP(r,\mG,\mJ)$ be a merged product action digraph, where the digraphs in $\mG$ have $m$ vertices, and where $\mJ$ is a Hamming set. Build $\mJ'\subseteq \{0,1\}^r$ from $\mJ$ by substituting any nonzero entry of a sequence in $\mJ$ with $1$. Then
	\[\MP \left( r,\mG,\mJ \right) = \MP \left( r, \{\bL_m,\bK_m\},\mJ' \right) \,.\]
	In particular, a generalised Hamming graph arises from Construction~\ref{con:general} if and only if $\mJ$ is a Hamming set.
\end{remark}

\begin{remark}
	The ordering of the Cartesian components in the definition of a Hamming set does not matter: indeed, a permutation of the components corresponds to a conjugation of the group $H$ in $\Sym(r)$, thus defining isomorphic digraphs in Construction~\ref{con:general}.
\end{remark}

We are ready to state our main result.

\begin{thm}
	\label{thm:main}
	
	Let $\vG$ be a finite vertex-primitive digraph with at least one arc. Then
	\[\RelFix(\vG) > \frac{1}{3}\] 
	if and only if one of the following occurs:
	\begin{enumerate}[$(i)$]
		
		\item $\vG$ is a generalised Hamming graph $\mathbf{H}(r,m,\mJ)$, with $m\ge 4$, and
		\[\RelFix(\vG) = 1 - \frac{2}{m} \,;\]
		
		\item $\vG$ is a merged product action graph $\MP(r,\mG,\mJ)$, where $r\ge 1$, where $\mJ$ is a non-Hamming subset of $X^r$ with $X = \{0,1,\ldots, |\mG|-1\}$, and where $\mG$ is as in one of the following:		
		\begin{enumerate}[$(a)$]
		
			\item $\mG = \{\mathbf{J}(m,k,i) \mid i \in \{0,1,\ldots,k\}\}$ is the family of distance-$i$ Johnson graphs, where $k, m$ are fixed integers such that $k\ge 2$ and $m \ge 2k+2$ (see Section~\ref{sec:mergedJordan} for details), and
			\[\RelFix(\vG) = 1 - \frac{2k(m-k)}{m(m-1)} \,;\]
			
			\item $\mG = \{\mathbf{QJ}(2m,m,i) \mid i \in \{0,1,\ldots,\lfloor m/2 \rfloor\}\}$ is the family of squashed distance-$i$ Johnson graphs,
			where $m$ is a fixed integer with $m\ge 4$ (see Section~\ref{sec:quotJordan} for details), and \[\RelFix(\vG) = \frac{1}{2}\left( 1 - \frac{1}{2m-1} \right) \,;\]
			
			\item $\mG = \{\bL_m,\vG_1,\vG_2\}$, where $\vG_1$ is a strongly regular graph listed in Section~\ref{sec:SRG}, $\vG_2$ is its complement, and \[\RelFix(\vG)=\RelFix(\vG_1)\]
			(the relative fixities are collected in Table~\ref{table}).
		\end{enumerate}
	\end{enumerate}
\end{thm}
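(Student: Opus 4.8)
The plan is to turn the fixity hypothesis into a statement about minimal degree, feed it into the classification of \cite{BurnessGuralnick}, and then, group by group, read off the invariant digraphs. Write $\Omega := V\vG$ and $G := \Aut(\vG)$; since $\vG$ is vertex-primitive, $G$ is a primitive permutation group on $\Omega$, and $\vG$ is a union of $G$-orbital digraphs. The identity $\Fix(\vG) = |\Omega| - \mu(G)$ shows that $\RelFix(\vG) > \tfrac13$ is equivalent to $\mu(G) < \tfrac23|\Omega|$. Hence it suffices to run through the primitive groups $G$ on $\Omega$ with $\mu(G) < \tfrac23|\Omega|$ provided by \cite{BurnessGuralnick}, to describe for each the possible unions of $G$-orbital digraphs, and, conversely, to verify that every digraph listed in the statement is vertex-primitive with the asserted relative fixity. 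Note that the family of such $G$ is closed under passing to overgroups (a group containing a primitive group is primitive, and $\mu$ can only decrease), so each $\vG$ with $\RelFix(\vG)>\tfrac13$ really is captured under its full automorphism group.

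Before treating the list I would isolate one structural lemma. If $G_0 \le \Sym(\Delta)$ is primitive and not regular with $|\Delta|\ge 3$, and $H\le\Sym(r)$ is transitive, then in the product action of $G_0\wr H$ on $\Delta^r$ one has $\mu(G_0\wr H) = \mu(G_0)\,|\Delta|^{r-1}$, the minimum being attained by a minimal-support element of $G_0$ acting in a single coordinate; moreover the $(G_0\wr H)$-orbital digraphs on $\Delta^r$ are precisely the merged product action digraphs $\MP(r,\mG,\mJ)$ with $\mG$ the family of $G_0$-orbital digraphs on $\Delta$ and $\mJ$ ranging over the $H$-invariant subsets of $X^r$ (this is the content of Remark~\ref{rem:orbHam} and Lemma~\ref{lem:Ham}, suitably specialised). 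The degree formula is elementary: a wreath element with nontrivial top component fixes at most $|\Delta|^{r-1}$ tuples, so has support at least $(|\Delta|-1)|\Delta|^{r-1}\ge\mu(G_0)|\Delta|^{r-1}$ (using $\mu(G_0)\le|\Delta|-1$, valid since $G_0$ is not regular), whence only base elements compete, and among those the cheapest moves a single coordinate minimally.

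The product-action entries of the \cite{BurnessGuralnick} list then fall, up to replacing $G$ by the relevant wreath product over its socle, into three sub-cases according to whether $\Delta$ is the vertex set of $\bK_\ell$ (the case $k=1$), the set of $k$-subsets of an $\ell$-set with $k\ge 2$, or --- when $\ell=2m$ --- the set of partitions of a $2m$-set into two $m$-sets. In the first $\mG=\{\bL_\ell,\bK_\ell\}$ and $\mu(G_0)=2$; in the second $\mG=\{\mathbf{J}(\ell,k,i): i\}$ and $\mu(G_0)=2\binom{\ell-2}{k-1}$; in the third $\mG=\{\mathbf{QJ}(2m,m,i): i\}$ and $\mu(G_0)=\binom{2m-2}{m-1}$. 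Dividing $\mu(G_0)|\Delta|^{r-1}$ by $|\Delta|^r$ reproduces exactly the ratios $1-\tfrac2m$, $\,1-\tfrac{2k(m-k)}{m(m-1)}$ and $\tfrac12\bigl(1-\tfrac1{2m-1}\bigr)$ of $(i)$, $(ii)(a)$ and $(ii)(b)$. Remark~\ref{rem:isomHam} does the sorting: if $\mJ$ is Hamming the digraph is a generalised Hamming graph and is recorded under $(i)$ (with $m$ the common order of the base digraphs), while for non-Hamming $\mJ$ it stays in the Johnson, respectively squashed-Johnson, family; and the bounds $m\ge 4$, then $k\ge2$ with $m\ge 2k+2$, then $m\ge4$, are precisely what is needed for the displayed ratio to exceed $\tfrac13$, for the base group to be primitive, and to avoid the small coincidences ($\mathbf{J}(2k+1,k,\cdot)\cong\mathbf{J}(2k+1,k+1,\cdot)$, and $\mathbf{J}(5,2,\cdot)$ and $\mathbf{QJ}(6,3,\cdot)$ being strongly regular) which get absorbed into $(ii)(c)$.

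The complementary, bounded part of the \cite{BurnessGuralnick} list is handled case by case: its entries are primitive groups of rank at most $3$, together with finitely many further small groups. For a rank-$3$ group the two nontrivial orbital graphs form a strongly regular graph $\vG_1$ and its complement $\vG_2$, so the invariant digraphs are the $\MP(r,\{\bL_m,\vG_1,\vG_2\},\mJ)$ of $(ii)(c)$, and the product-action formula of the structural lemma gives $\RelFix(\vG)=\RelFix(\vG_1)$; enumerating the list, discarding the groups with $\mu(G)\ge\tfrac23 m$ and those already lying in the infinite families, leaves exactly the strongly regular graphs tabulated in Section~\ref{sec:SRG} and Table~\ref{table}. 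For the converse, each listed digraph is vertex-primitive because it is a union of orbital digraphs for a primitive group (a product action of $\Sym(\ell)\wr H$, respectively a rank-$3$ group) and a group containing a primitive group is primitive, while its relative fixity has been computed above and the parameter restrictions make it exceed $\tfrac13$. I expect the real work --- and the main obstacle --- to be the bookkeeping of the last two paragraphs: extracting from the group-theoretic statement of \cite{BurnessGuralnick} a complete and non-redundant list of orbital-digraph families, pinning down $\Aut(\vG)$ sharply enough (not merely a primitive subgroup) to evaluate $\mu(\Aut(\vG))$, and correctly disposing of the small-parameter degeneracies where a Johnson or squashed-Johnson digraph secretly coincides with a strongly regular graph from the sporadic table or with a generalised Hamming graph. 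The product-action lemma, stating that minimal-support elements live in a single coordinate, is what makes the whole reduction go through and should be set up and proved first.
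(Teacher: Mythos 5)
Your overall strategy coincides with the paper's: convert the fixity bound into $\mu(\Aut\vG)<\tfrac23|V\vG|$, invoke the Burness--Guralnick classification, describe the invariant digraphs of each listed group, and compute fixities via a product-action minimal-degree lemma. That lemma (minimal-support elements lie in the base group and act in a single coordinate) is correct and is exactly the paper's Lemma~\ref{lem:relFixHam}, and your values of $\mu$ and the resulting ratios all check out. However, two substantive steps are asserted rather than argued, and both are where the paper has to do real work. First, the phrase ``up to replacing $G$ by the relevant wreath product over its socle'' hides the main technical point of the product-action case: the group $G$ supplied by the classification is only a \emph{subgroup} of $K\wr\Sym(r)$ of product action type, so its orbital digraphs a priori refine those of the full wreath product, and the class of $G$-invariant digraphs could be strictly larger than the class of digraphs $\MP(r,\mG,\mJ)$ your lemma produces. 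The paper closes this with Theorem~\ref{thm:suborbits}: when the socle $T$ and $G_\Delta^\Delta$ have the same orbital digraphs on $\Delta$, any merging of base orbitals achieved by $h\in G^\Omega$ is achieved by some element $(g_1,\dots,g_r)h\in G$, so $G$ and $G_\Delta^\Delta\wr G^\Omega$ have the same orbital digraphs on $\Delta^r$; the hypothesis is then verified case by case (Lemmas~\ref{lem:orbJohn} and~\ref{lem:orbSq}, $2$-transitivity of the rank-$2$ socles, Kantor--Liebler for rank $3$). Without some version of this your case analysis does not account for all unions of $\Aut(\vG)$-orbitals.

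Second, for the ``if'' direction you must evaluate $\mu(\Aut\vG)$ for the \emph{full} automorphism group, not merely exhibit a primitive subgroup with small minimal degree; you flag this as the main obstacle but do not resolve it. The paper does so using Liebeck's maximality theorem: $K$ (respectively $K\wr\Sym(r)$) is maximal in the relevant symmetric or alternating group, so either $\Aut(\vG)$ is the expected group or it contains the full alternating group (respectively $\Alt(m)^r$), and in the latter case $\vG$ is forced to be a generalised Hamming graph, which has been excluded; combined with \cite{Jones2005} this is what legitimises the fixity formulas in $(ii)(a)$--$(c)$. These are gaps of execution rather than of conception --- your plan points at both --- but as written the argument is incomplete precisely at the two places where the paper's proof is nontrivial.
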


\begin{remark}
\label{rem:graph}
	Although we do not assume that a vertex-primitive digraph $\vG$ in Theorem~\ref{thm:main} is a graph,
	the assumption of large relative fixity forces it to be such. In other words, every vertex-primtive digraph of relative fixity larger than $\frac{1}{3}$ is a graph.
\end{remark}

\begin{remark}
	The relative fixity can be arbitrarily close to $1$. Indeed, this can be achieved by choosing a generalised Hamming graph $\mathbf{H}(r,m,\mJ)$ with $m$ arbitrarily large.
\end{remark}

By analysing the vertex-primitive graphs of relative fixity more than $\frac{1}{3}$, one can notice that the out-valency of these graphs must grow as the number of vertices grows.
 More explicitly, a careful inspection of the families in Theorem~\ref{thm:main} leads to the following result, the proof of which we leave out.

\begin{remark}
	\label{cor:growth}
	There exists a  constant $C$ such that every
	finite connected vertex-primitive digraph $\vG$ with
	\[\RelFix(\vG) > \frac{1}{3}\]
	satisfies
	\[\val(\vG)\ge C \log\left( | V\vG | \right) \,.\]
\end{remark}

Observe that, for the Hamming graphs $\mathbf{H}(r,m)$
with $m\ge 4$, we have that
\[\val\left(\mathbf{H}(r,m)\right) = r(m-1) \ge r \log(m) = \log\left(|V\mathbf{H}(r,m)|\right) \,.\]
In particular, as both expressions are linear in $r$, a logarithmic bound in Remark~\ref{cor:growth} is the best that can be achieved.

One of the consequences of Remark~\ref{cor:growth} is that for every positive integer $d$ 
there exist only finitely many connected vertex-primitive digraphs of out-valency at most $d$ and relative fixity exceeding $\frac{1}{3}$.

As Theorem~\ref{thm:mainX} and Corollary~\ref{cor:graphs1} show,
this remains to be true if $\frac{1}{3}$ is substituted by an arbitrary positive constant.
 We thank P.~Spiga for providing us with the main ideas used in the proof.


\begin{thm}
	\label{thm:mainX}
	Let $\alpha$ and $\beta$ be two positive constants,
	and let $\F$ be a family of quasiprimitive permutation groups $G$ on $\Omega$ satisfying:
	\begin{enumerate}[$(a)$]
	  \item $\mu(G)\le (1-\alpha)|\Omega|$; and
	  \item $|G_\omega|\le\beta$ for every $\omega \in \Omega$.
	\end{enumerate}
	Then $\F$ is a finite family. 
\end{thm}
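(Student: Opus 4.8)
The plan is to deduce from the hypotheses that the degree $n:=|\Omega|$ is bounded by a function of $\alpha$ and $\beta$ alone. Once this is done, transitivity gives $|G|=n\,|G_\omega|\le\beta n$, so $|G|$ is bounded; since there are only finitely many groups of bounded order and, for each, finitely many permutation representations of bounded degree, the family $\F$ is finite. Thus the whole problem reduces to bounding $n$.

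The first step is the elementary counting identity for a transitive action $\Omega\cong G/H$ with $H=G_\omega$: for every $g\in G$,
\[\fix(g)=\frac{n\,|g^{G}\cap H|}{|g^{G}|}\,.\]
Hypothesis $(a)$ furnishes a nontrivial element $g$ with $\fix(g)\ge\alpha n$, and plugging this into the identity together with hypothesis $(b)$ yields
\[|g^{G}|\le\frac{|g^{G}\cap H|}{\alpha}\le\frac{|H|}{\alpha}\le\frac{\beta}{\alpha}=:c\,.\]
Thus $G$ has a nontrivial element whose conjugacy class has bounded size.

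Next I would study the normal closure $B:=\langle g^{G}\rangle$. As a nontrivial normal subgroup of the quasiprimitive group $G$, it is transitive on $\Omega$. Since $B$ is generated by the at most $c$ conjugates of $g$, its centre is $Z(B)=\bigcap_{y\in g^{G}}C_{B}(y)$, an intersection of at most $c$ subgroups each of index $|y^{B}|\le|g^{G}|\le c$ in $B$; hence $[B:Z(B)]\le c^{c}$. Now $Z(B)$ is characteristic in $B$, hence normal in $G$, so quasiprimitivity forces a dichotomy: either $Z(B)=1$, in which case $|B|=[B:Z(B)]\le c^{c}$ and transitivity of $B$ gives $n\le|B|\le c^{c}$; or $Z(B)\ne 1$ is transitive, and being abelian and transitive it is regular, so $G$ possesses an abelian regular normal subgroup $A:=Z(B)$ with $|A|=n$.

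The one branch requiring a more hands-on argument — and the step I expect to be the main obstacle — is the affine case $Z(B)\ne 1$. Here the standard structure theory of quasiprimitive groups with an abelian regular normal subgroup shows that $A$ is elementary abelian, say $A\cong\ZZ_{p}^{\,d}$ with $n=p^{d}$, and that $G_\omega$ embeds faithfully and irreducibly into $\mathrm{GL}(d,p)$; from $|G_\omega|\le\beta$ one gets $d\le\beta$. Choosing $\omega_{0}\in\Fix(g)$ as base point, so that $g\in G_{\omega_{0}}$, let $\varphi\in\mathrm{GL}(d,p)$ be the image of $g$; then $\varphi\ne\id$, so $\mathrm{rank}(\varphi-\id)\ge 1$, and $\fix(g)=|\ker(\varphi-\id)|=p^{\,d-\mathrm{rank}(\varphi-\id)}$. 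Comparing with $\fix(g)\ge\alpha p^{d}$ forces $p^{\,\mathrm{rank}(\varphi-\id)}\le 1/\alpha$, hence $p\le 1/\alpha$ and $n=p^{d}\le(1/\alpha)^{\beta}$. In either branch $n$ is bounded in terms of $\alpha$ and $\beta$, and the theorem follows. Apart from this affine analysis, the argument uses only elementary group theory and a single invocation of quasiprimitivity, with no appeal to the O'Nan--Scott/Praeger classification.
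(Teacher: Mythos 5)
Your proof is correct, and its engine is the same as the paper's: the fixed-point counting identity together with hypotheses $(a)$ and $(b)$ produces a nontrivial $g$ with $\fix(g)\ge\alpha|\Omega|>0$ and $|g^G|\le \beta/\alpha=:c$, after which one plays a bounded-index normal subgroup against quasiprimitivity. The divergence is in which normal subgroup you choose. The paper takes $N=\bigcap_{h\in G}\C_G(g^h)=\C_G(B)$ where $B=\langle g^G\rangle$, bounds $|G:N|\le\lceil c\rceil!$, and kills the transitive branch of the dichotomy in one line: $g$ centralizes $N$ and fixes a point, so if $N$ were transitive $g$ would fix the entire $N$-orbit of that point, i.e.\ all of $\Omega$, forcing $g=1$. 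You instead take $Z(B)=\bigcap_{y\in g^G}\C_B(y)$, bound $[B:Z(B)]\le c^c$ by Poincar\'e, and treat the transitive branch as a genuine affine case. Your affine analysis is correct as written (elementary abelian regular normal subgroup, faithful irreducible $G_\omega\le\mathrm{GL}(d,p)$ giving $d\le\beta$, and the rank computation giving $p\le 1/\alpha$, hence $n\le(1/\alpha)^\beta$), but it is never entered: the paper's one-line argument applies verbatim to your $Z(B)$, since $g\in B$ commutes with $Z(B)$ and fixes a point, so a transitive $Z(B)$ already forces $g=1$. Thus $Z(B)=1$ always holds, and your first branch alone finishes the proof with $n\le c^c$ and $|G|\le\beta c^c$, a bound of the same quality as the paper's $\lceil c\rceil!$. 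In short: same approach, a slightly different (equally valid) centralizer subgroup, and one superfluous --- though correctly executed --- case.
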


\begin{cor}
	\label{cor:graphs1}
	Let $\alpha$ be a positive constant, and let $d$ be a positive integer. 
	There are only finitely many vertex-primitive digraphs of out-valency at most $d$ and relative fixity exceeding $\alpha$.
\end{cor}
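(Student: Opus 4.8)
Throughout write $G=\Aut(\vG)$ and $\Omega=V\vG$. The plan is to read off Corollary~\ref{cor:graphs1} from Theorem~\ref{thm:mainX} applied to $G$. Let $\vG$ be a vertex-primitive digraph with $\val(\vG)\le d$ and $\RelFix(\vG)>\alpha$. Assuming as we may that $\vG$ has an arc $(u,v)$ with $u\ne v$ — the only alternatives being the edgeless digraph and the all-loops digraph, both of which have $\Aut(\vG)=\Sym(\Omega)$ and hence fall outside the scope of Theorem~\ref{thm:mainX}, so must be excluded from the statement — I would first record that, since $\vG$ is vertex-primitive, $G$ is a primitive, and therefore quasiprimitive, permutation group on $\Omega$ (an intransitive nontrivial normal subgroup would have orbits forming a nontrivial block system). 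It then remains to verify hypotheses $(a)$ and $(b)$ of Theorem~\ref{thm:mainX} with constants depending only on $\alpha$ and $d$.

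For $(a)$, the identity $\Fix(\vG)=|\Omega|-\mu(G)$ recalled in Section~\ref{sec:intro} gives
\[\mu(G)=|\Omega|-\Fix(\vG)=\bigl(1-\RelFix(\vG)\bigr)\,|\Omega|<(1-\alpha)\,|\Omega|,\]
so $(a)$ holds with the constant $\alpha$. For $(b)$, fix $\omega\in\Omega$. Vertex-transitivity and the existence of a non-loop arc give an arc from $\omega$ to some vertex $\ne\omega$, so $\vG^+(\omega)\setminus\{\omega\}$ is a nonempty $G_\omega$-invariant subset of $\Omega\setminus\{\omega\}$ of size at most $\val(\vG)\le d$; hence $G_\omega$ has a nontrivial orbit $\Delta$ with $1\le|\Delta|\le d$, i.e. $G$ is a primitive group possessing a suborbit of length at most $d$. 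By the resolution of Sims' conjecture (Cameron, Praeger, Saxl and Seitz) there is a function $f\colon\ZZ_{>0}\to\ZZ_{>0}$ such that every finite primitive permutation group with a suborbit of length $s$ has point stabiliser of order at most $f(s)$; putting $\beta=\max\{f(s):1\le s\le d\}$ yields $|G_\omega|\le\beta$, so $(b)$ holds with this $\beta$.

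By Theorem~\ref{thm:mainX} the groups $G$ arising this way form a finite family, hence have bounded degree: there is an integer $N=N(\alpha,d)$ with $|V\vG|\le N$ for every such $\vG$. Since there are only finitely many digraphs on at most $N$ vertices, the corollary follows. The one substantive point is the bound on $|G_\omega|$ used in verifying $(b)$: controlling a point stabiliser of a primitive group purely in terms of a subdegree is exactly Sims' conjecture, a consequence of the classification of finite simple groups, and this — together with Theorem~\ref{thm:mainX}, which already absorbs the difficult group-theoretic and combinatorial work — is where the content lies; everything else is routine bookkeeping.
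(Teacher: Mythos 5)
Your proof is correct and follows essentially the same route as the paper's: use the out-valency bound and the resolution of Sims' conjecture to bound the point stabiliser, then feed this $\beta$ together with $\mu(G)<(1-\alpha)|\Omega|$ into Theorem~\ref{thm:mainX}. Your aside about excluding the edgeless and all-loops digraphs is in fact a genuine refinement: those digraphs satisfy the stated hypotheses for every $n$ while having $\Aut(\vG)=\Sym(\Omega)$ with unbounded point stabilisers, so the paper's one-line proof (which asserts $|G_v|\le\f(d)$ directly from the out-valency hypothesis) silently glosses over exactly the degenerate cases you flag.
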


The proof of Theorem~\ref{thm:main} can be found in Section~\ref{sec:proof}, while Theorem~\ref{thm:mainX} and Corollary~\ref{cor:graphs1} are proved in Section~\ref{sec:X}.

\section{Basic concepts and notations}
\label{sec:notation}

\subsection{Product action}
We start by recalling the definition of a wreath product and its product action. By doing so, we also settle the notation for the rest of the paper. We refer to \cite[Section~2.6 and~2.7]{DM} for further details.

Let $H$ be a permutation group on a finite set $\Omega$. Suppose that $r=|\Omega|$, and, without loss of generality, identify $\Omega$ with the set $\{1,2,\ldots, r\}$. For an arbitrary set $X$, we may define a {\em permutation action of $H$ of rank $r$ over $X$} as the the action of $H$ on the set $X^r$ given by the rule
\[(x_1,x_2,\ldots,x_r)^h = \left( x_{1h^{-1}}, x_{2h^{-1}}, \ldots, x_{rh^{-1}} \right) \,.\]

Let $K$ be a permutation group on a set $\Delta$. We can consider the permutation action of $H$ of rank $r$ over $K$ by letting
\[(k_1,k_2,\ldots,k_r)^h = (k_{1h^{-1}},k_{2h^{-1}}, \ldots, k_{rh^{-1}}) \quad \hbox{for all } (k_1,k_2,\ldots,k_r)\in K^r, \, h \in H \,.\]
If we denote by $\vartheta$ the homomorphism $H \to \Aut(K^r)$ corresponding to this action, then the \emph{wreath product of $K$ by $H$}, in symbols $K\wr H$, is the semidirect product $K^r\rtimes_\vartheta H$. We call $K^r$ the \emph{base group}, and $H$ the \emph{top group} of this wreath product.

Note that the base and the top group are both embedded into $K\wr H$
via the monomorphisms
\[(k_1,k_2,\ldots,k_r)\mapsto \left((k_1,k_2,\ldots,k_r),1_H\right)\]
and
\[ h\mapsto \left((1_K,1_K,\ldots,1_K),h\right) \,.\]
In this way, we may view the base and the top group as subgroups of the wreath product and identify an element $((k_1,k_2,\ldots,k_r),h) \in K\wr H$ with the product $(k_1,k_2,\ldots,k_r)h$ of $(k_1,k_2,\ldots,k_r)\in K^r$ and $h\in H$ (both viewed as elements of the group $K\wr H$).

The wreath product $K\wr H$ can be endowed with an action on $\Delta^r$ by letting
\[(\delta_1, \delta_2, \ldots, \delta_r)^{(k_1, k_2, \ldots, k_r) h} = \left( \delta_1^{k_1}, \delta_2^{k_2}, \ldots, \delta_r^{k_r} \right)^{h} = \left( \delta_{1h^{-1}}^{k_{1h^{-1}}}, \delta_{2h^{-1}}^{k_{2h^{-1}}}, \ldots, \delta_{rh^{-1}}^{k_{rh^{-1}}} \right) \,,\]
for all $(\delta_1, \delta_2, \ldots, \delta_r) \in \Delta^r, (k_1,k_2,\ldots,k_r) \in K^r$, and $h\in H$.
We call this action the \emph{product action of the wreath product $K\wr H$ on $\Delta^r$}.

We recall the condition for a wreath product endowed with product action to be primitive.

\begin{lem}[{{\cite[Lemma~2.7A]{DM}}}]
\label{lem:primitiveWreath}
	Let $K$ be a permutation group on $\Delta$ and let $H$ be a permutation group on $\Omega$. The wreath product $K \wr H$ endowed with the product action on $\Delta^r$ is primitive if and only if $H$ is transitive and $K$ is primitive but not regular.
\end{lem}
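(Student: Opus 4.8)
This is a classical fact (as the citation indicates); I sketch the structure of the two implications. Write $W = K\wr H$, let $N = K^r$ be the base group, and identify $H$ with the top group, so that $W = N\rtimes H$, where $N$ acts coordinatewise on $\Delta^r$ and $H$ permutes the $r$ coordinates. I would work throughout with the point $\bar\delta = (\delta,\delta,\ldots,\delta)$, for which a one-line computation gives $W_{\bar\delta} = K_\delta\wr H$; recall that $W$ is primitive on $\Delta^r$ precisely when it is transitive and admits no $W$-invariant equivalence relation other than the equality relation and the universal one. Here one assumes, as usual, $|\Delta|\ge 2$ and $r\ge 2$, the case $r=1$ being a restatement of the definition of primitivity.

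For the ``only if'' direction I would establish the three conditions by contraposition. Transitivity of $W$ on $\Delta^r$ forces the first coordinate of the $W$-orbit of $\bar\delta$ to exhaust $\Delta$, whence $K$ is transitive. If $K$ were regular, then $K_\delta=1$ and $W_{\bar\delta}$ reduces to the top group $H$; the diagonal subgroup $D=\{(k,k,\ldots,k): k\in K\}$ of $N$ is normalised by $H$, and $H\lneq DH\lneq W$ since $1<|K|<|K|^r$ (as $K$ is transitive on at least two points and $r\ge 2$), contradicting the maximality of $W_{\bar\delta}$. If $H$ were intransitive, I would pick an $H$-orbit $\Omega_1$ with $1\le|\Omega_1|<r$ and take the relation ``agree on all coordinates in $\Omega_1$''; it is $W$-invariant because $H$ preserves $\Omega_1$ setwise and $N$ acts coordinatewise, and it is nontrivial because its classes have size $|\Delta|^{r-|\Omega_1|}$, strictly between $1$ and $|\Delta|^r$. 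Either way primitivity fails.

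For the ``if'' direction, assume $H$ transitive and $K$ primitive but not regular; transitivity of $W$ is clear, so I must show that any $W$-invariant equivalence relation $\sim$ possessing a class of size at least $2$ is universal. The crucial preliminary step is that, for primitive non-regular $K$, one has $\mathrm{Fix}(K_\delta)=\{\delta\}$: the translates of $\mathrm{Fix}(K_\delta)$ are the sets $\mathrm{Fix}(K_{\delta'})$, any two of which coincide or are disjoint by maximality of point stabilisers, so they partition $\Delta$ in a $K$-invariant way, hence into singletons (the alternative $\mathrm{Fix}(K_\delta)=\Delta$ would give $K_\delta=1$, i.e.\ $K$ regular). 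Granting this, if $\alpha\sim\beta$ are distinct I choose a coordinate $i$ where they differ and some $k\in K_{\alpha_i}$ with $\beta_i^{\,k}\ne\beta_i$, and apply the base-group element with $k$ in coordinate $i$ and $1$ elsewhere: this fixes $\alpha$ and moves $\beta$ only in coordinate $i$, producing a $\sim$-related pair differing in exactly one coordinate. Restricting $\sim$ to the ``line'' through such a pair in that coordinate direction yields a nontrivial $K$-congruence, so by primitivity of $K$ the whole line lies in a single $\sim$-class; translating by base-group elements (to move the frozen coordinates, using transitivity of $K$) and conjugating by elements of $H$ (to change the varying coordinate, using transitivity of $H$) shows that every line in every coordinate direction is collapsed. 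Finally, any two tuples can be joined by changing one coordinate at a time, so $\sim$ is the universal relation.

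I expect the ``if'' direction to be the main obstacle, and more precisely the reduction of an arbitrary $\sim$-related pair to one differing in a single coordinate: this is the only place where the hypothesis that $K$ is not regular is genuinely used, through the identity $\mathrm{Fix}(K_\delta)=\{\delta\}$. By contrast, once the right equivalence relation (for intransitive $H$) and the right intermediate subgroup (for regular $K$) are identified, the ``only if'' direction is routine.
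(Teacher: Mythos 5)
The paper does not prove this lemma at all --- it is quoted verbatim from Dixon--Mortimer \cite[Lemma~2.7A]{DM} --- so your attempt can only be measured against the standard textbook argument, which is essentially what you reconstruct. Your ``if'' direction is correct and complete as a sketch: the reduction $\Fix(K_\delta)=\{\delta\}$ for primitive non-regular $K$, the production of a $\sim$-related pair differing in a single coordinate, the collapse of one line via primitivity of $K$, and the propagation to all lines via transitivity of $K$ and of $H$ are exactly the right steps, and the final chaining argument closes the implication. The two counterexamples in the ``only if'' direction (the diagonal subgroup $D$ sitting strictly between $W_{\bar\delta}=H$ and $W$ when $K$ is regular, and the congruence ``agree on the coordinates of a proper $H$-orbit'' when $H$ is intransitive) are also the standard ones and are correctly verified.

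There is, however, one genuine gap: in the ``only if'' direction you never establish that $K$ is \emph{primitive}. You derive transitivity of $K$ from transitivity of $W$, non-regularity of $K$ from maximality of $W_{\bar\delta}$, and transitivity of $H$ from the absence of nontrivial congruences, but the conclusion of the lemma also asserts primitivity of $K$, and a transitive imprimitive $K$ is not excluded by anything you wrote. The repair is short and in the same spirit as your intransitive-$H$ case: if $\sim_0$ is a nontrivial proper $K$-congruence on $\Delta$, then the relation $\alpha\sim\beta \iff \alpha_i\sim_0\beta_i$ for all $i$ (equivalently, the block $B^r$ for a nontrivial block $B$ of $K$) is preserved by the base group coordinatewise and by the top group because it is symmetric in the coordinates, and its classes have size $c^r$ with $1<c<|\Delta|$, contradicting primitivity of $W$. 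With that paragraph added, your proof is a correct and complete rendering of the cited result.
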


We now introduce some notation to deal with any subgroup $G$ of $\mathrm{Sym}(\Delta) \wr \mathrm{Sym}(\Omega)$ endowed with product action on $\Delta^r$.

By abuse of notation, we identify the set $\Delta$ with
\[ \left\lbrace \{\delta\}\times \Delta^{r-1} \,\middle\vert\, \delta \in \Delta \right\rbrace \]
via the mapping $\delta \mapsto \{\delta\}\times \Delta^{r-1}$.
We denote by $G_\Delta^\Delta$ the permutation group that $G_\Delta$ induces on $\Delta$, that is,
\[G_\Delta^\Delta \cong G_\Delta / G_{(\Delta)} \,.\]
(Recall that $G_{(\Delta)}$ denotes the pointwise stabilizer of $\Delta$.)

Moreover, recalling that every element of $G$ can be written uniquely as $gh$, for some $g\in\Sym(\Delta)^r$ and some $h \in \Sym(\Omega)$, we can define the group homomorphism
\[\psi: G \to \Sym(\Omega), \quad gh \mapsto h \,.\]
This map defines a new permutational representation of $G$ acting on $\Omega$. We denote by $G^{\Omega}$ the permutation group corresponding to the faithful action that $G$ defines on $\Omega$, that is,
\[G^{\Omega} \cong G / \ker (\psi) \,.\]

Recall that a primitive group $G$, according to the O'Nan--Scott classification (see, for instance, \cite[III$(b)(i)$]{LiebeckPraegerSaxl1988}), is said to be of \emph{product action type} if there exists a transitive group $H\le \Sym(\Omega)$ and a primitive almost simple group $K\le \Sym(\Delta)$ with socle $T$ such that, for some integer $r\ge 2$, \[T^r\le G\le K \wr H \,,\]
where $T^r$ is the socle of $G$, thus contained in the base group $K^r$.
A detailed description of primitive groups of product action type was given by L.~G.~Kov\'acs in \cite{Kovacs}.

\begin{remark}
	By \cite[Theorem~1.1~$(b)$]{PraegerSchneider}, a group $G$ of product action type is permutationally isomorphic to a subgroup of $G^\Delta_\Delta \wr G^\Omega$. Therefore, up to a conjugation in $\Sym(\Delta^r)$, the group $K$ can always be chosen as $G^\Delta_\Delta$, and $H$ as $G^\Omega$.
\end{remark}
		
\subsection{Groups acting on digraphs}	
We give a short summary of standard notations for digraphs and graphs.

If a subgroup $G\le \mathrm{Aut}(\vG)$ is primitive on $V\vG$, we say that $\vG$ is \emph{$G$-vertex-primitive}. In a similar way, if $G$ is transitive on $A\vG$, we say that $\vG$ is \emph{$G$-arc-transitive}. The analogue notions can be defined for graphs, and when $G=\Aut(\vG)$ we drop the prefix $G$.

For any vertex $v\in \V\vG$, we denote by $\vG(v)$ its \emph{out-neighbourhood}, that is, the set of vertices $u\in \vG$ such that $(v,u)\in A\vG$. The size of the out-neighbourhood of a vertex $v$, $|\vG(v)|$, is called \emph{out-valency of $v$}. If $\vG$ is $G$-vertex-primitive, for some group $G$, then the out-valency in independent of the choice of the vertex $v$, thus we will refer to it as the \emph{out-valency of $\vG$}, in symbols $\val(\vG)$.
Whenever $\vG$ is a graph, \emph{neighbourhood} and \emph{valency} can be defined in the same way.

An \emph{orbital for $G$} is an orbit of $G$ in its induced action on $\Omega\times \Omega$. An \emph{orbital digraphs for $G$} is a digraph whose vertex-set is $\Omega$, and whose arc-set is an orbital for $G$. An example of orbital for $G$ is the {\em diagonal orbital} $(\omega,\omega)^G$, whose corresponding disconnected orbital graph is called {\em diagonal orbital graph}. We refer to \cite[Section~3.2]{DM} for further details.

Note that an orbital graph for $G$ is always $G$-arc-transitive, and, conversely, every $G$-arc-transitive digraph is an orbital graph for $G$. Furthermore, if $G\le \mathrm{Aut}(\vG)$ is a group of automorphism for a given digraph $\vG$, then $\vG$ is a union of orbitals for $G$ acting on $\V\vG$.

The number of distinct orbital digraphs for $G$ is called the \emph{permutational rank of $G$}. In particular, $2$-transitive permutation groups are precisely those of permutational rank $2$.

If $A\subseteq \Omega\times\Omega$ is an orbital for $G$, then so is the set $A^*=\{(\beta,\alpha) \mid (\alpha,\beta) \in A\}$. If $A=A^*$, then 
the orbital $A$ is called {\em self-paired}. Similarly, an orbital digraph is {\em self-paired} if its arc-set is a self-paired orbital. Note that any $G$-arc-transitive graph is obtained from a self-paired orbital digraph for $G$.

\section{Orbital digraphs for wreath products in product action}
\label{sec:preliminaryResults}
We are interested in reconstructing the orbital digraphs of a wreath product $K\wr H$ endowed with product action once the orbital digraphs of $K$ are known.

\begin{lem}
\label{lem:obritalPA}
	Let $K\wr H$ be a wreath product endowed with the product action on $\Delta^r$, and let
	\[\mG=\{ \vG_0, \vG_1, \ldots, \vG_k \}\]
	be the complete list of the orbital digraphs for $K$. Then any orbital digraph is a merged product action digraph of the form
	\[\MP\left( r,\mG,(j_1,j_2,\ldots,j_r)^H \right) \,,\]
	for a sequence of indices $(j_1,j_2,\ldots,j_r)\in X^r$, where $X=\{0,1,\ldots,k\}$.
\end{lem}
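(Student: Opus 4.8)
The plan is to unravel what an orbital of $K \wr H$ on $\Delta^r \times \Delta^r$ looks like by first identifying $\Delta^r \times \Delta^r$ with $(\Delta \times \Delta)^r$ and tracking the action of $K \wr H$ under this identification. Concretely, I would send the pair $\bigl((\delta_1,\ldots,\delta_r),(\delta_1',\ldots,\delta_r')\bigr)$ to $\bigl((\delta_1,\delta_1'),(\delta_2,\delta_2'),\ldots,(\delta_r,\delta_r')\bigr)$. The first step is to check that, under this bijection, the product action of $K \wr H$ on pairs is carried to the product action of $K \wr H$ on $(\Delta \times \Delta)^r$, where now $K$ acts diagonally on $\Delta \times \Delta$ (i.e.\ $(\delta,\delta')^k = (\delta^k,\delta'^k)$) and $H$ permutes the $r$ coordinates. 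This is a routine verification directly from the definition of the product action recalled in Section~\ref{sec:notation}, but it is the conceptual heart of the argument, since it converts the problem into an orbit computation for a product action on a power.

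The second step is to describe the $K$-orbits on $\Delta \times \Delta$: by definition these are exactly the orbitals of $K$, and each is the arc-set of one of the orbital digraphs $\vG_0, \vG_1, \ldots, \vG_k$ in $\mG$. So a $K$-orbit on $\Delta \times \Delta$ is labelled by an index $j \in X = \{0,1,\ldots,k\}$, and $(\delta,\delta')$ lies in the orbit labelled $j$ precisely when $(\delta,\delta') \in A\vG_j$. The third step is the standard fact about orbits of a product action $K \wr H$ on $Y^r$: two tuples $(y_1,\ldots,y_r)$ and $(z_1,\ldots,z_r)$ lie in the same $K \wr H$-orbit if and only if the tuple of $K$-orbit labels of the $z_i$ is obtained from the tuple of $K$-orbit labels of the $y_i$ by applying some $h \in H$ to the coordinates. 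Applying this with $Y = \Delta \times \Delta$, we get that the $K\wr H$-orbit of $\bigl((\delta_1,\delta_1'),\ldots,(\delta_r,\delta_r')\bigr)$ is determined by the $H$-orbit of the label tuple $(j_1,\ldots,j_r)$, where $j_i$ is the index with $(\delta_i,\delta_i') \in A\vG_{j_i}$.

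The final step is bookkeeping: translating this orbit description back through the identification $(\Delta\times\Delta)^r \cong \Delta^r \times \Delta^r$ and comparing with Construction~\ref{con:general}. An arc of the orbital digraph corresponding to the label orbit $(j_1,\ldots,j_r)^H$ is a pair $\bigl((u_1,\ldots,u_r),(v_1,\ldots,v_r)\bigr)$ such that for some $(j_1',\ldots,j_r') \in (j_1,\ldots,j_r)^H$ we have $(u_i,v_i) \in A\vG_{j_i'}$ for all $i$; that is exactly the arc-set of $\bigcup_{(j_1',\ldots,j_r') \in (j_1,\ldots,j_r)^H} \vG_{j_1'} \times \cdots \times \vG_{j_r'} = \MP(r,\mG,(j_1,\ldots,j_r)^H)$, using the definition of the direct product and of the union of digraphs. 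Hence every orbital digraph of $K\wr H$ has the claimed form. I expect the main (though still modest) obstacle to be setting up the coordinate/label bookkeeping cleanly enough that the verifications in steps one and four are genuinely transparent rather than a morass of nested subscripts; there is no deep difficulty, only the risk of index-chasing errors, and a careful choice of notation up front should neutralize it.
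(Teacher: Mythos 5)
Your proposal is correct and follows essentially the same route as the paper's proof: first compute the orbit under the base group $K^r$ componentwise (yielding a product $\vG_{j_1}\times\cdots\times\vG_{j_r}$ of orbital digraphs of $K$), then let the top group $H$ permute the coordinates to merge these products over the $H$-orbit of the index tuple. Your identification of $\Delta^r\times\Delta^r$ with $(\Delta\times\Delta)^r$ is just a cleaner packaging of the same bookkeeping, not a different argument.
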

\begin{proof}
	Let $\vG$ be an orbital digraph for $K\wr H$. Suppose that $(u,v)\in A\vG$, where $u=(u_1,u_2,\ldots, u_r)$ and $v=(v_1,v_2,\ldots, v_r)$. We aim to compute the $K\wr H$-orbit of $(u,v)$, and, in doing so, proving that there is a sequence of indices $(j_1,j_2,\ldots,j_r)\in X^r$ such that
	\[A\vG = A\MP\left( r,\mG,(j_1,j_2,\ldots,j_r)^H \right) \,.\]
	
	We start by computing the $K^r$-orbit of $(u,v)$ (where by $K^r$ we refer to the base group of $K\wr H$). Since this action is componentwise, we obtain that
	\[(u,v)^{K^r}=A\left( \vG_{j_1} \times \vG_{j_2} \times \ldots \times \vG_{j_r}\right) \,,\]
	where $(u_i,v_i)$ is an arc of $\vG_{j_i}$ for all $i=1,2,\ldots,r$.
	
	The top group $H$ acts by permuting the components, so that
	\[(u,v)^{K\wr H}=\bigcup_{(j_1',j_2',\ldots,j_r') \in (j_1,j_2,\ldots,j_r)^H} A\left( \vG_{j_1'} \times \vG_{j_2'} \times \ldots \times \vG_{j_r'}\right)\]
	Therefore, the arc-sets of  $\vG$ and $\MP\left( r,\mG,(j_1,j_2,\ldots,j_r)^H \right)$ coincide.
	
	As their vertex-sets are both $\Delta^r$, the proof is complete.
\end{proof}

Now that we know how to build the orbital digraphs for a permutation group in product action, we ask ourselves what can we say about the orbital digraphs of its subgroups.

\begin{thm}
\label{thm:suborbits}
	Let $G\le \mathrm{Sym}(\Delta) \wr \mathrm{Sym}(\Omega)$ be a primitive group of product action type, and let $T$ be the socle of $G_\Delta^\Delta$. Suppose that $T$ and $G_\Delta^\Delta$ share the same orbital digraphs.
	Then the orbital digraphs for $G$ coincide with the orbital digraphs for $G_\Delta^\Delta \wr G^\Omega$, or, equivalently, for $T \wr G^\Omega$.
\end{thm}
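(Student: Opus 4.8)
The plan is to run everything through the concrete O'Nan--Scott representatives. By the remark recalled in Section~\ref{sec:notation} (based on \cite{PraegerSchneider}), we may assume that $K=G^\Delta_\Delta$, $H=G^\Omega$, $r=|\Omega|$ and $T=\soc(K)$, so that
\[T^r\le G\le K\wr H\]
as permutation groups on $\Delta^r$ in the product action, and, crucially, the projection $\psi$ maps $G$ onto $H$. Since $G\le K\wr H$, every $G$-orbit on $\Delta^r\times\Delta^r$ lies inside a $(K\wr H)$-orbit, so the whole statement reduces to the reverse inclusion: for each $(u,v)\in\Delta^r\times\Delta^r$ we must show $(u,v)^{K\wr H}\subseteq(u,v)^G$. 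The last clause of the theorem, equivalence with $T\wr H$, I would settle at once from Lemma~\ref{lem:obritalPA}, which presents the orbital digraphs of $T\wr H$ and of $K\wr H$ as the merged product action digraphs built from the list of orbital digraphs of $T$, respectively of $K$; by hypothesis these two lists coincide, hence so do the two families of orbital digraphs.

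First I would pin down the one place where the hypothesis is used. Both $T$ and $K$ are transitive on $\Delta$, so their orbital digraphs partition $\Delta\times\Delta$; the assumption that these partitions agree is the same as saying that the $T$-orbit and the $K$-orbit of any pair in $\Delta\times\Delta$ coincide. Because $T^r$ acts coordinatewise on $\Delta^r\times\Delta^r$, this promotes to the statement that, for every $(u,v)\in\Delta^r\times\Delta^r$, the $T^r$-orbit of $(u,v)$ equals $A(\vG_{j_1}\times\cdots\times\vG_{j_r})$, where $\vG_{j_i}$ is the orbital digraph of $K$ containing the coordinate pair $(u_i,v_i)$.

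With this in hand, fix $(u,v)$ and take the $\vG_{j_i}$ as above. By the orbit computation inside the proof of Lemma~\ref{lem:obritalPA}, $(u,v)^{K\wr H}=\bigcup_{h\in H}A(\vG_{j_{1h^{-1}}}\times\cdots\times\vG_{j_{rh^{-1}}})$. For each $h\in H$, using $\psi(G)=H$, I would choose an element $g=\kappa h\in G$ with $\kappa\in K^r$. Applying $g$ to $(u,v)$ permutes the coordinates by $i\mapsto ih^{-1}$ and then twists each by an element of $K$, so the $i$-th coordinate pair of $(u,v)^g$ still lies in the orbital digraph $\vG_{j_{ih^{-1}}}$ of $K$; by the previous paragraph, the $T^r$-orbit of $(u,v)^g$ is then precisely $A(\vG_{j_{1h^{-1}}}\times\cdots\times\vG_{j_{rh^{-1}}})$, and since $gT^r\subseteq G$ this whole set lies in $(u,v)^G$. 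Unioning over $h\in H$ gives $(u,v)^{K\wr H}\subseteq(u,v)^G$, hence equality, and the theorem follows.

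The only step that needs genuine care is the coordinate bookkeeping just above: one must keep the inverse in $(j_1,\ldots,j_r)^h=(j_{1h^{-1}},\ldots,j_{rh^{-1}})$ consistent between the $H$-orbit description of $(u,v)^{K\wr H}$ from Lemma~\ref{lem:obritalPA} and the coordinatewise action of $g=\kappa h$ on the pair $(u,v)$, so that the product digraphs really do match factor by factor. Everything else is routine: the argument is just the orbit computation of Lemma~\ref{lem:obritalPA} transported through an element of $G$ realising a prescribed $h\in H$ and then thickened by the base group $T^r\le G$, and no structure theory of almost simple groups enters beyond the reduction allowing $K=G^\Delta_\Delta$ and $H=G^\Omega$.
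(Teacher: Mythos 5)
Your proposal is correct and follows essentially the same route as the paper: reduce to $K=G^\Delta_\Delta$, $H=G^\Omega$ via \cite{PraegerSchneider}, note that $G\le K\wr H$ gives one containment, and obtain the other by lifting each $h\in H$ to an element $(g_1,\ldots,g_r)h\in G$ whose base-group part preserves the orbital digraphs of $K$ (which equal those of $T$ by hypothesis), then thickening by the $T^r$-orbit. The paper phrases this last step as a contradiction argument about two product digraphs being merged under $H$ but not under $G$, whereas you phrase it as a direct orbit inclusion, but the content is identical.
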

\begin{proof}
	Since $G$ is a primitive group of product action type, we can suppose that $G$ is a subgroup of $G_\Delta^\Delta \mathop\mathrm{wr} G^\Omega$ with socle $T^r$, where $r=|\Omega|$. Further, we set $K = G_\Delta^\Delta$, $H=G^\Omega$. 
	
	As $G\le K\wr H$, the partition of $\Delta^r\times \Delta^r$ in arc-sets of orbital digraphs for $K\wr H$ is coarser than the one for $G$. Hence, our aim is to show that a generic orbital digraph for $K\wr H$ is also an orbital digraph for $G$.
	
	Let
	\[\mG=\{ \vG_0, \vG_1, \ldots, \vG_k \}\]
	be the complete list of orbital digraphs for $T$ acting on $\Delta$, and let $X=\{0,1,\ldots,k\}$. Observe that the set of orbital digraphs for $T^r$ can be identified with the Cartesian product of $r$ copies of $\mG$: indeed, we can bijectively map the generic orbital digraph $T^r$, say $\vG_{j_1}\times \vG_{j_2}\times \ldots \times \vG_{j_r}$, for some $(j_1,j_2,\ldots,j_r)\in X^r$, to the generic $r$-tuple of the Cartesian product $\mG^r$ of the form $( \vG_{j_1}, \vG_{j_2}, \ldots , \vG_{j_r} )$. This point of view explains why $H$ can act on the set of orbital digraphs for $T^r$ with its action of rank $r$.
	
	Observe that the set of orbital digraphs for $T^r$ is equal to the set of orbital digraphs for $K^r$. Moreover, $T^r$ is a subgroup of $G$, and $K^r$ is a subgroup of $K \wr H$. Thus the orbital digraphs for $G$ and for $K \wr H$ are obtained as a suitable unions of the elements of $\mG^r$.
	
	By Lemma~\ref{lem:obritalPA}, the orbital digraphs for $K \wr H$ are of the form
	\[\bigcup_{(j_1',j_2',\ldots,j_r') \in (j_1,j_2,\ldots,j_r)^H} \vG_{j_1'} \times \vG_{j_2'} \times \ldots \times \vG_{j_r'} \,,\]
	for some $(j_1,j_2,\ldots,j_r)\in X^r$. Aiming for a contradiction, suppose that
	\[\vG_{j_1} \times \vG_{j_2} \times \ldots \times \vG_{j_r} \quad \hbox{and} \quad \vG_{i_1} \times \vG_{i_2} \times \ldots \times \vG_{i_r}\]
	are two distinct orbital digraphs for $T^r$ that are merged under the action of top group $H$, but they are not under the action of $G$. The first portion of the assumption yields that there is an element $h\in H$ such that
	\[ \left(\vG_{j_1}\times \vG_{j_2}\times \ldots \times \vG_{j_r}\right) ^h =
	\vG_{i_1} \times \vG_{i_2} \times \ldots \times \vG_{i_r} \,.\]
	By definition of $H=G^\Omega$, there is an element in $G$ of the form
	\[(g_1,g_2,\ldots,g_r)h\in G.\]
	Recalling that, for any $i=1,2,\ldots,r$, $g_i\in K$, we get
	\[ \left(\vG_{j_1}\times \vG_{j_2}\times \ldots \times \vG_{j_r}\right) ^{(g_1,g_2,\ldots,g_r) h} =
	\vG_{i_1} \times \vG_{i_2} \times \ldots \times \vG_{i_r} \,.\]
	Therefore, the merging among these orbital graphs is also realised under the action of $G$, a contradiction.
	
	By the initial remark, the proof is complete.
\end{proof}

\section{Daily specials}
\label{sec:con}
The aim of this section is to give a descriptions of the digraphs appearing in Theorem~\ref{thm:main}.

\subsection{Generalised Hamming graphs}
\label{sec:genHam}

In this section, we clarify Remark~\ref{rem:orbHam} and we compute the relative fixity of the generalised Hamming graphs.

\begin{lem}
	\label{lem:Ham}
	Let $H\le \Sym(r)$ be a transitive permutation group, let $G=\Alt(\Delta)\wr H$ endowed with the product action on $\Delta^r$, and let $\vG$ be a digraph with vertex-set $V\vG = \Delta^r$. Then $G\le \Aut(\Gamma)$ if and only if $\vG$ is a generalised Hamming graph $\mathbf{H}(r,m,\mJ)$,
	where $|\Delta|=m$ and $\mJ\subseteq \{0,1\}^r$ is $H$-invariant.
\end{lem}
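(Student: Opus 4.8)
The plan is to prove both implications by passing through the description of $G$-invariant digraphs as unions of orbital digraphs. Recall first that $\Alt(\Delta)$ is a primitive but not regular permutation group on $\Delta$ as soon as $m = |\Delta| \ge 4$ (for $m \le 3$ the statement should be read as vacuous or handled separately — I would either restrict to $m \ge 4$ or note that $\Alt(\Delta)$ fails to be primitive-nonregular there); hence by Lemma~\ref{lem:primitiveWreath} the group $G = \Alt(\Delta)\wr H$ is primitive on $\Delta^r$ whenever $H$ is transitive. A digraph $\vG$ on $\Delta^r$ satisfies $G \le \Aut(\vG)$ if and only if $A\vG$ is a union of orbitals for $G$ (this is exactly the standard fact recalled in Section~\ref{sec:notation}: any $G$-invariant digraph is a union of orbitals for $G$, and conversely). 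So the task reduces to identifying the orbital digraphs of $G = \Alt(\Delta)\wr H$ in product action and matching them with the merged product action digraphs $\mathbf{H}(r,m,\mJ)$.

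Next I would compute the orbital digraphs of $K := \Alt(\Delta)$ on $\Delta$. Since $\Alt(\Delta)$ is $2$-transitive on $\Delta$ (again for $m \ge 4$), it has permutational rank $2$: exactly two orbitals, the diagonal orbital and its complement. The corresponding orbital digraphs are precisely $\bL_m$ and $\bK_m$. Therefore the complete list of orbital digraphs for $K$ is $\mG = \{\bL_m, \bK_m\}$ with index set $X = \{0,1\}$, where we follow Construction~\ref{con:general}'s convention $\vG_0 = \bL_m$. Now apply Lemma~\ref{lem:obritalPA}: every orbital digraph for $K \wr H$ is of the form $\MP(r, \{\bL_m,\bK_m\}, (j_1,\ldots,j_r)^H)$ for some $(j_1,\ldots,j_r) \in \{0,1\}^r$, and conversely each such $H$-orbit on $\{0,1\}^r$ gives an orbital digraph. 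Taking unions of orbitals, the $G$-invariant digraphs on $\Delta^r$ are exactly the digraphs $\MP(r, \{\bL_m,\bK_m\}, \mJ)$ where $\mJ \subseteq \{0,1\}^r$ ranges over the $H$-invariant subsets (a union of $H$-orbits is the same thing as an $H$-invariant set). By Definition~\ref{de:genHam}, with $H$ transitive these are precisely the generalised Hamming graphs $\mathbf{H}(r,m,\mJ)$ with $\mJ$ $H$-invariant, which is the claim. One should also note that $\mathbf{H}(r,m,\mJ)$ is genuinely a graph (symmetric arc-set), which is immediate since both $\bL_m$ and $\bK_m$ are graphs and direct products and unions of graphs are graphs.

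The step I expect to require the most care is not any single implication but the bookkeeping that makes the two directions line up cleanly: namely verifying that "union of orbital digraphs of $G$" and "$\MP(r,\{\bL_m,\bK_m\},\mJ)$ with $\mJ$ $H$-invariant" are literally the same collection of digraphs, with no digraph counted twice and none omitted. Concretely, I must check that distinct $H$-invariant subsets $\mJ$ yield distinct digraphs (so that the $\mJ \mapsto \MP(r,\{\bL_m,\bK_m\},\mJ)$ correspondence is a genuine parametrisation), which follows because the orbitals of $T^r = \Alt(\Delta)^r$ on $\Delta^r \times \Delta^r$ are exactly the products $\vG_{j_1}\times\cdots\times\vG_{j_r}$ and these are pairwise disjoint and distinct for distinct tuples in $\{0,1\}^r$, so a union of them is determined by the set of tuples it contains. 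A secondary point worth a sentence is the low-$m$ degenerate behaviour: for $m \le 3$ the group $\Alt(\Delta)$ is regular (or trivial), $G$ need not be primitive, and the rank-$2$ claim fails; I would simply assume $m \ge 4$ in the statement (consistent with its use in Theorem~\ref{thm:main}$(i)$) or dispatch the small cases by direct inspection. With these checks in place the equivalence is complete.
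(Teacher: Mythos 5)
Your proof is correct and follows essentially the same route as the paper: both directions are reduced, via Lemma~\ref{lem:obritalPA} and the $2$-transitivity of $\Alt(\Delta)$, to the identification of the $G$-invariant digraphs with unions of the orbital digraphs $\MP\left(r,\{\bL_m,\bK_m\},(j_1,\ldots,j_r)^H\right)$. Your caveat about small $m$ is well taken --- for $m=3$ the nondiagonal orbitals of $\Alt(3)$ are directed $3$-cycles and the statement fails as literally written, a point the paper's proof glosses over --- but it is harmless in context, since the lemma is only ever invoked with $m\ge 4$.
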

\begin{proof}
	By applying Lemma~\ref{lem:obritalPA} and taking the union of the resulting orbital digraphs, we obtain the left-to-right direction of the equivalence.
	Let us now deal with the converse implication. Let $\vG=\mathbf{H}(r,m,\mJ)$,
	where $|\Delta|=m$ and $\mJ\subseteq \{0,1\}^r$ is $H$-invariant.
	By Construction~\ref{con:general} and Definition~\ref{de:genHam},
	\[\mathbf{H}(r,m,\mJ) = \bigcup\limits_{h\in H} \left( \bigcup\limits_{i=0}^b  \bK_m^{a+i} \times \bL_m^{b+c-i} \right)^h \,,\]
	for some non negative integers $a,b$ such that $a+b\le r$.
	As each component of the graphs in parenthesis is either $\bK_m, \bL_m$ or $\bK_m\cup\bL_m$, we have that \[\Alt(m)^r\le \Aut\left( \bigcup\limits_{i=0}^b  \bK_m^{a+i} \times \bL_m^{b+c-i} \right) \,.\]
	Moreover, as $\mJ$ is $H$-invariant, the action of rank $r$ that $H$ induces on $\Delta^r$ preserves the arc-set of $\mathbf{H}(r,m,\mJ)$. As $G$ is generated by $\Alt(m)^r$ and this $H$ in their actions on $\Delta^r$,
	this implies that $G\le \Aut(\vG)$, as claimed.
\end{proof}

Instead of directly computing the relative fixity of $\mathbf{H}(r,m,\mJ)$, we prove the following stronger result.
\begin{lem}
\label{lem:relFixHam}
	Let $K\wr H$ be a wreath product endowed with the product action on $\Delta^r$, and let $\vG$ be a digraph with vertex set $\Delta^r$. Suppose that $K\wr H\le \Aut(\vG)$. Then
	\[\RelFix(\vG) = 1 - \frac{\mu\left( \Aut(\vG) \cap \Sym(\Delta)^r \right)}{|V\vG|} \,.\]
	In particular, the relative fixity of a generalised Hamming graph is
	\[\RelFix\left( \mathbf{H}(r,m,\mJ) \right) = 1 - \frac{2}{m} \,.\]
\end{lem}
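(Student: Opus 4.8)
The plan is to prove the general formula first and then deduce the generalised Hamming case as a corollary. The key observation is that an automorphism of $\vG$ which fixes the largest number of vertices need not be searched for in all of $\Aut(\vG)$, but can be taken inside the base-group part $\Aut(\vG)\cap\Sym(\Delta)^r$. To see this, suppose $g\in\Aut(\vG)$ is nontrivial with $|\Fix(g)|=\Fix(\vG)$. Since $K\wr H\le\Aut(\vG)$ and $K$ is transitive on $\Delta$, the base group $\Sym(\Delta)^r\cap\Aut(\vG)$ is large; in particular it contains $K^r$. The idea is that, because the product action is so homogeneous, one can conjugate $g$ by an element of the base group, or compose it with a suitable base-group element, to land on a nontrivial element of $\Aut(\vG)\cap\Sym(\Delta)^r$ with at least as many fixed points. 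Concretely, write $g=(g_1,\dots,g_r)h$ with $h\in\Sym(\Omega)$; if $h\ne 1$ then $g$ moves entire ``coordinate slabs'' and one checks $|\Fix(g)|$ is then at most $m^{r-1}$-ish, which is beaten by an element of $K$ acting on a single coordinate and fixing everything else (giving $m^{r-1}\cdot\fix(K\text{-element})$ fixed points). If $h=1$ then $g$ is already in the base group. Either way the maximum is attained inside $\Aut(\vG)\cap\Sym(\Delta)^r$, which gives
\[\Fix(\vG) = |V\vG| - \mu\bigl(\Aut(\vG)\cap\Sym(\Delta)^r\bigr) \,,\]
and dividing by $|V\vG|$ yields the first claim.

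For the particular case, apply Lemma~\ref{lem:Ham}: a generalised Hamming graph $\mathbf{H}(r,m,\mJ)$ satisfies $\Alt(\Delta)\wr H\le\Aut(\vG)$, so the first part applies with $K=\Alt(\Delta)$. It remains to compute $\mu\bigl(\Aut(\vG)\cap\Sym(\Delta)^r\bigr)$. An element $(g_1,\dots,g_r)\in\Sym(\Delta)^r$ lies in $\Aut(\vG)$ precisely when it preserves each orbital-union defining $\vG$; since every coordinate graph is one of $\bL_m$, $\bK_m$, $\bK_m\cup\bL_m$, the constraint on each $g_i$ is either nothing or ``$g_i\in\Sym(\Delta)$'', so in fact $\Aut(\vG)\cap\Sym(\Delta)^r=\Sym(\Delta)^r$ (for the relevant $\mJ$, at least one coordinate is genuinely constrained to be nontrivially acting, but $\Sym(\Delta)$ already preserves $\bK_m$ and $\bL_m$). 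Then $\mu(\Sym(\Delta)^r)$ is the minimal support of a nontrivial element: take a transposition in a single coordinate, which moves $2\cdot m^{r-1}$ of the $m^r$ vertices, and this is clearly minimal. Hence $\mu\bigl(\Aut(\vG)\cap\Sym(\Delta)^r\bigr)=2m^{r-1}$ and
\[\RelFix\bigl(\mathbf{H}(r,m,\mJ)\bigr) = 1 - \frac{2m^{r-1}}{m^r} = 1 - \frac{2}{m} \,.\]

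The main obstacle is the first step: rigorously showing that the fixity-maximising automorphism can be pushed into the base group. The subtlety is that $\Aut(\vG)$ may be strictly larger than $K\wr H$ and may contain elements whose $\Sym(\Omega)$-component $h$ is nontrivial yet which still fix many vertices — one must bound $|\Fix(g)|$ for such $g$ carefully. The clean way is: if $h\ne 1$, pick $i$ with $ih^{-1}\ne i$; then a fixed point $(\delta_1,\dots,\delta_r)$ of $g$ forces $\delta_i=\delta_{ih^{-1}}^{g_{ih^{-1}}}$, a condition cutting the count of fixed tuples down by a factor of at least $2/m$ in those two coordinates, so $|\Fix(g)|\le \tfrac{1}{m}\cdot m^r\cdot(\text{something}\le 1) $, in any case $|\Fix(g)| \le m^{r-1}$, which is strictly less than $m^r - 2m^{r-1} = (m-2)m^{r-1}$ whenever $m\ge 4$ — and more to the point strictly less than $|\Fix(t)|$ for $t$ a base-group transposition in one coordinate. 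So the optimum is in the base group, as needed. Everything else is routine bookkeeping with the product action.
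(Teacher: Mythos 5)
Your proposal is correct and follows essentially the same route as the paper: you show that any $g=(g_1,\ldots,g_r)h$ with $h\neq 1$ fixes at most $m^{r-1}$ vertices, compare this with a single-coordinate element of the base group, and conclude that the minimal degree is attained in $\Aut(\vG)\cap\Sym(\Delta)^r$, then specialise to a one-coordinate transposition for the Hamming case. The only remarks are that you (like the paper) implicitly use that $\Aut(\vG)\le\Sym(\Delta)\wr\Sym(r)$ and that $K$ has a nontrivial element with a fixed point, i.e.\ $\mu(K)\le m-1$; both hold in every application of the lemma.
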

\begin{proof}
	Suppose that $|\Delta|=m$, then, by hypothesis,
	\[K \wr H \le \Aut(\vG) \le \Sym(m) \wr \Sym(r) \,.\]
	We claim that the automorphism that realizes the minimal degree must be contained in $\Aut(\vG)\cap \Sym(m)^r$ (where $\Sym(m)^r$ is the base group of $\Sym(m)\wr \Sym(r)$). Indeed, upon choosing an element of minimal degree in $K\times \{\id\}\times \ldots \{\id\}$ and a transposition from the top group in $\Sym(m)\wr \Sym(r)$, we obtain the inequalities
	\[\begin{split}
		\mu\left( \Aut(\vG)\cap \Sym(m)^r \right) &\le \mu(K) m^{r-1} 
		\\ &\le (m-1)m^{r-1}
		\\ &\le \min \left\{|\supp(g)| \mid g \in \Aut(\vG)\setminus \Sym(m)^r \right\}
	\end{split}\]
	This is enough to prove the first portion of the statement.
	
	In particular, to compute the relative fixity of $\mathbf{H}(r,m,\mJ)$, it is enough to look at the action of $\Sym(m)$ on a single component. Thus, upon choosing a transposition in $\Sym(m)\times \{\id\}\times \ldots \{\id\}$, we obtain
	\[\RelFix\left( \mathbf{H}(r,m,\mJ) \right) =  1 - \frac{2m^{r-1}}{m^r} = 1 - \frac{2}{m}\,. \qedhere\]
\end{proof}

\color{black}

\subsection{Distance-$i$ Johnson graphs}
\label{sec:mergedJordan}

The nomenclature dealing with possible generalizations of the Johnson graph is as lush as confusing. In this paper, we are adopting the one from \cite{JonesJaycay}.

Let $m,k,i$ be integers such that $m\ge 1$, $1\le k\le m$ and $0\le i\le k$. A \emph{distance-$i$ Johnson graph}, denoted by $\mathbf{J}(m,k,i)$ is a graph whose vertex-set is the family of $k$-subsets of $\{1,2,\ldots, m\}$, and such that two $k$-subsets, say $X$ and $Y$, are adjacent whenever $|X\cap Y|=k-i$. The usual Johnson graph is then $\mathbf{J}(m,k,1)$, and two subsets $X$ and $Y$ are adjacent in $\mathbf{J}(m,k,i)$ if and only if they are at distance-$i$ in $\mathbf{J}(m,k,1)$.

\begin{lem}
\label{lem:orbJohn}
	Let $m,k$ be two positive integers such that $m \ge 2k+2$. The orbital digraphs of $\Alt(m)$ and of $\Sym(m)$ in their action on $k$-subsets are the distance-$i$ Johnson graphs $\mathbf{J}(m,k,i)$, one for each choice of $i\in \{0, 1,\ldots, k\}$.
\end{lem}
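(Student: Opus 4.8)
The plan is to show that the orbitals of $\Sym(m)$ (equivalently $\Alt(m)$, since $m \ge 2k+2$ forces these to coincide — see below) acting on the set $\binom{[m]}{k}$ of $k$-subsets are parametrised exactly by the intersection number $|X \cap Y|$, which ranges over $\{0,1,\ldots,k\}$, and that the orbital indexed by $|X\cap Y| = k-i$ is precisely the arc-set of $\mathbf{J}(m,k,i)$. First I would recall the standard fact that $\Sym(m)$ acts transitively on ordered pairs of $k$-subsets with a prescribed intersection size: given two pairs $(X,Y)$ and $(X',Y')$ with $|X\cap Y| = |X'\cap Y'|$, one has $|X \setminus Y| = |X' \setminus Y'|$, $|Y\setminus X| = |Y'\setminus X'|$, and $|[m]\setminus(X\cup Y)| = |[m]\setminus(X'\cup Y')|$, so any bijection $[m]\to[m]$ matching these four blocks pairwise carries $(X,Y)$ to $(X',Y')$. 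Conversely, the quantity $|X\cap Y|$ is obviously an invariant of the $\Sym(m)$-action, so the orbitals are in bijection with the possible values $0,1,\ldots,k$ of $|X\cap Y|$, giving $k+1$ orbitals. Rewriting $|X\cap Y| = k - i$ with $i \in \{0,1,\ldots,k\}$, the orbital digraph attached to the value $i$ has arc-set $\{(X,Y) : |X\cap Y| = k-i\}$, which is by definition the arc-set of $\mathbf{J}(m,k,i)$. (Note each such orbital is self-paired, since $|X\cap Y| = |Y\cap X|$, so these are genuinely graphs.)

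The remaining point is to check that $\Alt(m)$ has the \emph{same} orbitals as $\Sym(m)$ on $k$-subsets, i.e.\ that no $\Sym(m)$-orbital splits into two $\Alt(m)$-orbitals. This is where the hypothesis $m \ge 2k+2$ enters. Fix a pair $(X,Y)$ with $|X \cap Y| = k-i$; by the transitivity argument above it suffices to exhibit, for every such pair, an \emph{even} permutation mapping $(X,Y)$ to any other pair $(X',Y')$ with the same intersection size, or equivalently to show the $\Sym(m)$-stabiliser of $(X,Y)$ is not contained in $\Alt(m)$, i.e.\ it contains an odd permutation. The setwise stabiliser of the ordered pair $(X,Y)$ in $\Sym(m)$ is $\Sym(X\cap Y) \times \Sym(X\setminus Y) \times \Sym(Y\setminus X) \times \Sym([m]\setminus(X\cup Y))$, acting on the four blocks of sizes $k-i$, $i$, $i$, and $m - k - i$. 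Since $m \ge 2k+2 = k + k + 2 \ge (k-i) + i + i + 2$, the last block has size $m-k-i \ge k - i + 2 \ge 2$, so it contains a transposition, which is an odd permutation fixing $(X,Y)$. Hence the $\Sym(m)$-stabiliser of every such pair meets $\Sym(m)\setminus\Alt(m)$, so each $\Sym(m)$-orbital is a single $\Alt(m)$-orbital, and the two groups have identical orbital digraphs.

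I expect the main (and only real) obstacle to be bookkeeping with the block sizes to verify that the ambient set $[m]$ is large enough to house a spare transposition outside $X \cup Y$ in every case $0 \le i \le k$; the bound $m \ge 2k+2$ is exactly what guarantees $|[m]\setminus(X\cup Y)| = m - k - i \ge 2$ for all such $i$ (the worst case being $i = k$, where $|X \cup Y| = 2k$ and $m - 2k \ge 2$). One small subtlety worth spelling out is that the claim asserts these are \emph{all} the orbital digraphs, including the diagonal one ($i=0$, i.e.\ $X = Y$), which corresponds to $\mathbf{J}(m,k,0) = \bL_{\binom{m}{k}}$ under the conventions of the paper; this is consistent with $\mathbf{G}$ in Construction~\ref{con:general} and Lemma~\ref{lem:obritalPA} always including $\vG_0 = \bL_m$ as the digraph of the diagonal orbital. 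With these observations in place the lemma follows, and in fact one obtains the sharper statement that the permutational rank of both $\Alt(m)$ and $\Sym(m)$ on $k$-subsets equals $k+1$ when $m \ge 2k+2$.
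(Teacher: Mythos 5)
Your proposal is correct, and it reaches the same core conclusion as the paper --- that intersection size $|X\cap Y|$ is a complete invariant for the orbitals, so there are exactly $k+1$ of them and they are the $\mathbf{J}(m,k,i)$ --- but the mechanism you use to handle $\Alt(m)$ is genuinely different. The paper's proof is a two-line argument: since $\Alt(m)$ is $(m-2)$-transitive and $|X\cup Y|\le 2k\le m-2$, one can prescribe the images of all points of $X\cup Y$ by an even permutation, so the $\Alt(m)$-orbit of $(X,Y)$ already contains every pair with the same intersection size; the case of $\Sym(m)$ is then immediate. You instead run the argument in the opposite order: first the explicit block-matching bijection for $\Sym(m)$, then the standard descent criterion that a $\Sym(m)$-orbit fails to split under $\Alt(m)$ exactly when the pair-stabiliser contains an odd permutation, which you exhibit as a transposition in $[m]\setminus(X\cup Y)$ using $m-k-i\ge m-2k\ge 2$. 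Both routes consume the hypothesis $m\ge 2k+2$ in essentially the same numerical way (the paper needs $2k\le m-2$; you need $m-2k\ge 2$), and both are complete. Your version is more self-contained (it does not invoke high transitivity of $\Alt(m)$ as a black box) and has the small bonus of recording that every orbital is self-paired and that the permutational rank is $k+1$; the paper's version is shorter and generalises verbatim to the squashed Johnson graphs in its Lemma on $\mathbf{QJ}(2m,m,i)$, which is presumably why the authors chose it. One cosmetic remark: your worst case for the spare transposition is $i=k$ (where $|X\cup Y|=2k$), not $i=0$, and you do state this correctly at the end, but note that the diagonal orbital $i=0$ needs no argument at all since the stabiliser of $(X,X)$ obviously contains odd permutations for $m\ge k+2$.
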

\begin{proof}
	Suppose that two $k$-subsets $X$ and $Y$ are such that $(X,Y)$ is an arc of the considered orbital digraph and $|X\cap Y|=k-i$, for a nonnegative integer $i\le k$. Since $\Alt(m)$ is $(m-2)$-transitive and $2k \le m-2$, the $\Alt(m)$-orbit of the arc $(X,Y)$ contains all the pairs $(Z,W)$, where $Z$ and $W$ are $k$-subsets with $|Z\cap W|=k-i$. Therefore, the statement is true for the alternating group. The same proof can be repeated \emph{verbatim} for $\Sym(m)$. 
\end{proof}

\begin{lem}
\label{lem:fixJohn}	
	Let $m,k,i$ be three positive integers such that $m \ge 2k+2$ and $i\ne k$.
	Then the relative fixity of the distance-$i$ Johnson graphs $\mathbf{J}(m,k,i)$ is
	\[\RelFix(\mathbf{J}(m,k,i)) = 1 - \frac{2k(m-k)}{m(m-1)}\,.\]
\end{lem}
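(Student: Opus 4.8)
The plan is to compute the fixity of $\mathbf{J}(m,k,i)$ by exhibiting an automorphism with few moved points and then arguing that no automorphism does better. By Lemma~\ref{lem:orbJohn}, since $m\ge 2k+2$, the graph $\mathbf{J}(m,k,i)$ is an orbital digraph for $\Sym(m)$ acting on the set $\binom{[m]}{k}$ of $k$-subsets of $[m]$, so $\Sym(m)\le\Aut(\mathbf{J}(m,k,i))$. First I would identify a good candidate for an automorphism of large fixity inside this $\Sym(m)$: a transposition $\tau=(a\,b)$. The $k$-subsets it fixes are exactly those $X$ with $\{a,b\}\subseteq X$ or $\{a,b\}\cap X=\emptyset$, so $\fix(\tau)=\binom{m-2}{k-2}+\binom{m-2}{k}$, and a short manipulation of binomial coefficients gives
\[
\frac{\binom{m-2}{k-2}+\binom{m-2}{k}}{\binom{m}{k}} = 1 - \frac{2k(m-k)}{m(m-1)}\,.
\]
This establishes the lower bound $\RelFix(\mathbf{J}(m,k,i))\ge 1-\frac{2k(m-k)}{m(m-1)}$.

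For the upper bound I would need that no nontrivial automorphism of $\mathbf{J}(m,k,i)$ fixes more $k$-subsets than a transposition does. The natural route is to first pin down $\Aut(\mathbf{J}(m,k,i))$, or at least to show its minimal degree equals $\mu(\Sym(m))$ acting on $k$-subsets, which is the number of $k$-subsets moved by a transposition, namely $2\binom{m-2}{k-1}$. I would argue that for $m\ge 2k+2$ the full automorphism group is $\Sym(m)$ (acting on $k$-subsets), possibly with a small well-understood exception such as the complementation graph when $m=2k$, which is excluded here by $m\ge 2k+2$; this kind of statement is classical for Johnson-type graphs and I would cite it or the relevant orbital-graph/automorphism-group results. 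Once $\Aut(\mathbf{J}(m,k,i))=\Sym(m)$ on $k$-subsets is in hand, the minimal degree of $\Sym(m)$ on $k$-subsets is realised precisely by transpositions (any non-identity permutation $g\in\Sym(m)$ with support of size $s\ge 2$ moves at least $2\binom{m-2}{k-1}$ of the $k$-subsets, with equality iff $s=2$ — a standard counting argument comparing $\binom{m}{k}-\fix(g)$ across the cycle type of $g$), which yields the matching upper bound.

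The key steps, in order, are: (1) invoke Lemma~\ref{lem:orbJohn} to realise $\mathbf{J}(m,k,i)$ as an orbital graph for $\Sym(m)$ on $k$-subsets and hence get $\Sym(m)\le\Aut(\mathbf{J}(m,k,i))$; (2) compute $\fix(\tau)$ for a transposition $\tau$ and simplify the ratio to obtain the lower bound; (3) determine $\Aut(\mathbf{J}(m,k,i))$ under the hypothesis $m\ge 2k+2$, reducing to the symmetric group on $k$-subsets; (4) show that among all nontrivial elements of $\Sym(m)$ acting on $k$-subsets the transpositions have minimal support, giving the upper bound; (5) combine (2) and (4). The hypothesis $i\ne k$ is used so that the graph is not edgeless-with-loops pathology and is connected enough for the automorphism-group identification to apply in the stated form; I would flag where exactly $i\ne k$ enters (it ensures $\mathbf{J}(m,k,i)$ is not the "complement of a perfect matching on complements" type degenerate case and that Lemma~\ref{lem:orbJohn}'s list genuinely gives a non-diagonal orbital).

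The main obstacle I anticipate is step (3): proving, or correctly citing, that $\Aut(\mathbf{J}(m,k,i))$ is no larger than $\Sym(m)$ acting on $k$-subsets when $m\ge 2k+2$. For the ordinary Johnson graph $\mathbf{J}(m,k,1)$ this is a well-known theorem, but for the distance-$i$ variant one must be a little careful: $\mathbf{J}(m,k,i)$ need not be connected (for instance its connectivity depends on $\gcd$-type conditions on $i$), and different values of $i$ can in principle yield graphs with the same automorphism group or with extra automorphisms. The cleanest workaround, which I would adopt, is to avoid computing the full automorphism group directly and instead argue at the level of minimal degree: any $g\in\Aut(\mathbf{J}(m,k,i))$ of support size less than $2\binom{m-2}{k-1}$ on $\binom{[m]}{k}$ must, by a Jordan-type / local-recognition argument combined with the $(m-2)$-transitivity used in Lemma~\ref{lem:orbJohn}, already lie in $\Sym(m)$, after which step (4) finishes the job. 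If a self-contained Jordan-type argument proves too delicate, I would fall back on citing the classification of automorphism groups of distance-$i$ Johnson graphs from the reference \cite{JonesJaycay} adopted for the nomenclature.
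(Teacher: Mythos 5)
Your proposal follows essentially the same route as the paper: the paper cites \cite[Theorem~2~$(a)$]{Jones2005} to identify $\Aut(\mathbf{J}(m,k,i))$ with $\Sym(m)$ acting on $k$-subsets (this is exactly your step (3), and the hypotheses $m\ge 2k+2$ and $i\ne k$ are there precisely to invoke that result), and then quotes from \cite[Section~1]{GuralnickMagaard} that the minimal degree $2\binom{m-2}{k-1}$ is attained by a transposition, which is your steps (2) and (4). Your explicit binomial computation and the anticipated obstacles are sound, but no new argument beyond these two citations is needed.
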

\begin{proof}
	Under our assumption, by \cite[Theorem~2~$(a)$]{Jones2005}, the automorphism group of $\mathbf{J}(m,k,i)$ is $\Sym(m)$ in its action on $k$ subsets.
	Its minimal degree is achieved by any transposition (see \cite[Section~1]{GuralnickMagaard}), where
	\[\mu\left( \Sym(m) \right) = 2 \binom{m-2}{k-1}\,.\]
	Hence, we find that
	\[\RelFix(\mathbf{J}(m,k,i)) = 1 - \frac{2k(m-k)}{m(m-1)}\,. \qedhere\]
\end{proof}

\subsection{Squashed distance-$i$ Johnson graphs}
\label{sec:quotJordan}

A usual construction in the realm of distance transitive graphs consist in obtaining smaller example starting from a distance transitive graph and identifying vertices at maximal distance. We need to apply this idea to a family of generalised Johnson graphs.

Consider the distance-$i$ Johnson graph $\mathbf{ J}(2m,m,i)$, for some integers $m$ and $i$, with $m$ positive and $0\le i\le m$. We say that two vertices of ${\mathbf J}(2m,m,i)$ are \emph{disjoint} if they have empty intersection as $m$-subset. Observe that being disjoint is an equivalence relation, and our definition coincides with the usual notion of antipodal for ${\mathbf J}(2m,m,1)$ seen as a metric space. We can build a new graph $\mathbf{QJ}(2m,m,i)$ whose vertex-set is the set of equivalence classes of the disjoint relation, and such that, if $[X]$ and $[Y]$ are two generic vertices, then $([X],[Y])$ is an arc in $\mathbf{QJ}(2m,m,i)$ whenever there is a pair of representatives, say $X'\in[X]$ and $Y'\in[Y]$, such that $(X',Y')$ is an arc in ${\mathbf J}(2m,m,i)$. We call $\mathbf{QJ}(2m,m,i)$ an \emph{squashed distance-$i$ Johnson graph}.

Observe that $\mathbf{J}(2m,m,i)$ is a regular double cover of $\mathbf{QJ}(2m,m,i)$. Furthermore, $\mathbf{QJ}(2m,m,i)$ and $\mathbf{QJ}(2m,m,m-i)$ are isomorphic graphs, thus we can restrict the range of $i$ to $\{0,1,\ldots, \lfloor m/2 \rfloor\}$.

\begin{lem}
\label{lem:orbSq}
	Let $m\ge 3$ be an integer. The orbital digraphs of $\Alt(2m)$ and of $\Sym(2m)$ in their primitive actions with stabilizer $(\Sym(m) \wr C_2)\cap \Alt(2m)$ and $\Sym(m) \wr C_2$ respectively are the squashed distance-$i$ Johnson graphs $\mathbf{J}(m,k,i)$, one for each choice of $i\in \{0, 1,\ldots, \lfloor m/2\rfloor\}$.
\end{lem}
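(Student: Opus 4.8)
The plan is to mimic the proof of Lemma~\ref{lem:orbJohn}, but now in the primitive action of $\Sym(2m)$ (and $\Alt(2m)$) on the set of partitions of $\{1,\ldots,2m\}$ into two blocks of size $m$, i.e.\ on the quotient set identified in the construction of $\mathbf{QJ}(2m,m,i)$. First I would fix notation: a vertex is an unordered pair $\{X,\overline X\}$ with $X$ an $m$-subset and $\overline X$ its complement. Given two vertices $\{X,\overline X\}$ and $\{Y,\overline Y\}$, the relevant invariant is the multiset $\{|X\cap Y|,\,|X\cap\overline Y|\}=\{|X\cap Y|,\,m-|X\cap Y|\}$, which is completely determined by $i:=\min\{|X\cap Y|,\,m-|X\cap Y|\}\in\{0,1,\ldots,\lfloor m/2\rfloor\}$. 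So I would first observe that the arc set of any orbital digraph is a union of sets of the form $R_i=\{(\{X,\overline X\},\{Y,\overline Y\}):\min(|X\cap Y|,m-|X\cap Y|)=i\}$, and that these $R_i$ are pairwise disjoint and exhaust all ordered pairs; this is the combinatorial bookkeeping step and is routine.

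Next I would show each $R_i$ is a single orbital, i.e.\ that $\Alt(2m)$ (hence $\Sym(2m)$) acts transitively on $R_i$. The key is a transitivity/homogeneity argument: given two pairs in $R_i$, say coming from $m$-subsets $X,Y$ with $|X\cap Y|=m-i$ (WLOG, after possibly swapping $Y$ with $\overline Y$) and $Z,W$ with $|Z\cap W|=m-i$, I want a permutation in $\Alt(2m)$ sending $X\mapsto Z$ and $Y\mapsto W$. Since $\Alt(2m)$ is $(2m-2)$-transitive it is in particular transitive on ordered pairs of $m$-subsets with prescribed intersection size $m-i$ (the same argument as in Lemma~\ref{lem:orbJohn}, using $2m\le 2m$... more carefully: the orbit of $(X,Y)$ under $\Alt(2m)$ is exactly the set of pairs of $m$-subsets with $|X\cap Y|=m-i$, which needs high transitivity — and $\Alt(2m)$ is $(2m-2)$-transitive, which suffices because specifying $X$ and $Y$ amounts to partitioning $\{1,\ldots,2m\}$ into the four cells $X\cap Y$, $X\setminus Y$, $Y\setminus X$, $\overline{X\cup Y}$ of sizes $m-i,i,i,m-i$, and $\Alt(n)$ with $n\ge$ anything is transitive on ordered set-partitions with fixed cell sizes provided it is $(n-1)$- or at least $(n-2)$-transitive). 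This gives an element $g\in\Alt(2m)$ with $Xg=Z$, $Yg=W$, hence $\{X,\overline X\}g=\{Z,\overline Z\}$ and $\{Y,\overline Y\}g=\{W,\overline W\}$, so the two pairs lie in one orbital. Conversely any orbital is clearly contained in some $R_i$, so the orbitals are exactly the $R_i$.

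Then I would identify the orbital digraph with arc-set $R_i$ as the squashed graph $\mathbf{QJ}(2m,m,i)$: by the construction in Section~\ref{sec:quotJordan}, $([X],[Y])$ is an arc of $\mathbf{QJ}(2m,m,i)$ iff some representatives $X'\in\{X,\overline X\}$, $Y'\in\{Y,\overline Y\}$ have $|X'\cap Y'|=m-i$, which is precisely $\min(|X\cap Y|,m-|X\cap Y|)=i$, i.e.\ precisely $R_i$. For the $\Alt(2m)$ statement one checks that the stabilizer of a vertex $\{X,\overline X\}$ is $(\Sym(m)\wr C_2)\cap\Alt(2m)$ and that this action is primitive (the point stabilizer is maximal since $\Sym(m)\wr\Sym(2)$ is a maximal subgroup of $\Sym(2m)$ for $m\ge 3$, $m\neq$ the small exceptional cases — here $m\ge 3$ is exactly the hypothesis), and that restricting from $\Sym(2m)$ to $\Alt(2m)$ does not split any $R_i$; this last point follows from the same $(2m-2)$-transitivity argument applied already inside $\Alt(2m)$, which I have done above.

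The main obstacle I anticipate is the bookkeeping at the boundary case $i=\lfloor m/2\rfloor$ when $m$ is even: there $|X\cap Y|=m/2=m-|X\cap Y|$, so the two ``halves'' of the intersection pattern coincide and the relation $R_{m/2}$ is genuinely symmetric/self-paired while for $i<m/2$ one must be careful that $R_i$ is already symmetric anyway (it is, since swapping the two vertices preserves $\min(|X\cap Y|,m-|X\cap Y|)$). A secondary subtlety is making the ``$\Alt(2m)$ is transitive on ordered pairs of $m$-subsets with fixed intersection size'' claim fully rigorous: I would phrase it as transitivity on ordered partitions of $\{1,\ldots,2m\}$ into four labelled cells of sizes $m-i,i,i,m-i$, which holds because $\Alt(2m)$ is $(2m-2)$-transitive and $2m-2\ge$ the number of points one needs to pin down minus the freedom inside cells of size $\ge 2$ — concretely, any two such partitions differ by an even permutation whenever $2m\ge 4$, since one can always adjust parity using a transposition inside a cell of size $\ge 2$ (such a cell exists as $m\ge 3$ forces $m-i\ge 2$). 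I would state this as a short lemma or inline remark rather than belabor it, exactly as Lemma~\ref{lem:orbJohn} does for the Johnson case.
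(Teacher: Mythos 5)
Your proposal is correct and follows essentially the same route as the paper: identify the vertices with partitions of $\{1,\ldots,2m\}$ into two $m$-blocks, classify ordered pairs by the intersection invariant $\min(|X\cap Y|,\,m-|X\cap Y|)$, and use the high transitivity of $\Alt(2m)$ to show each class is a single orbital that coincides with the arc set of $\mathbf{QJ}(2m,m,i)$. The only cosmetic difference is that the paper invokes $(2m-2)$-transitivity directly (the arc is determined by the images of at most $2m-2$ points), whereas you argue via transitivity on ordered four-cell partitions with a parity fix inside a cell of size at least $2$; both are valid and your version is, if anything, slightly more explicit about the parity issue.
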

\begin{proof}
	To start, we note that the set $\Omega$ on which the groups are acting can be identified with the set of partitions of the set $\{1,2,\ldots,2m\}$ with two parts of equal size $m$. Suppose that $\{X_1,X_2\}$ and $\{Y_1,Y_2\}$ are two such partitions and that $\left(\{X_1,X_2\},\{Y_1,Y_2\}\right)$ is an arc of the orbital digraph we are building, with
	\[\min\{|X_1\cap Y_1|,\,|X_1\cap Y_2|\} = m-i \,,\]
	for a nonnegative integer $i\le \lfloor m/2\rfloor$. To determine the image of $\left(\{X_1,X_2\},\{Y_1,Y_2\}\right)$ under the group action, it is enough to know the images of $X_1$ and $Y_2$, that is, of at most $2m-\lceil m/2\rceil \le 2m - 2$ distinct points. By the $(2m-2)$-transitivity of $\Alt(2m)$, the $\Alt(2m)$-orbit of $\left(\{X_1,X_2\},\{Y_1,Y_2\}\right)$ contains all the arc of the form $\left(\{Z_1,Z_2\},\{W_1,W_2\}\right)$, where $\{Z_1,Z_2\},\{W_1,W_2\}\in \Omega$ and
	\[\min\{|Z_1\cap W_1|,\,|Z_1\cap W_2|\} = m-i \,.\]
	To conclude, observe that $\Omega$ is the set of $m$-subsets of $\{1,2,\ldots,2m\}$ in which two elements are identified if they are disjoint, and that
	\[\min\{|X_1\cap Y_1|,\,|X_1\cap Y_2|\} = m-i \,,\]
	is the adjacency condition in an squashed distance-$i$ Johnson graph. As in Lemma~\ref{lem:orbJohn}, the same reasoning can be exteneded to $\Sym(2m)$. Therefore, the orbital digraphs of $\Alt(2m)$ and of $\Sym(2m)$ in these primitive actions are the squashed distance-$i$ Johnson graphs $\mathbf{QJ}(2m,m,i)$, for some $i \in \{0,1,\ldots,\lfloor m/2 \rfloor\}$.
\end{proof}

\begin{lem}
\label{lem:fixSq}	
	Let $m,i$ be two positive integers such that $m \ge 3$ and $i\ne \lfloor m/2 \rfloor$.
	Then the relative fixity of the distance-$i$ Johnson graphs $\mathbf{QJ}(2m,m,i)$ is
	\[\RelFix(\mathbf{QJ}(2m,m,i)) = 1 - \frac{2k(m-k)}{m(m-1)} \,.\]
\end{lem}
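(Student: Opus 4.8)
The plan is to follow the blueprint of Lemma~\ref{lem:fixJohn} almost verbatim, with the action of $\Sym(m)$ on $k$-subsets replaced by the action of $\Sym(2m)$ on the $\tfrac12\binom{2m}{m}$ partitions of $\{1,\dots,2m\}$ into two halves of size $m$ — which, by Lemma~\ref{lem:orbSq}, is exactly the vertex-action carrying $\mathbf{QJ}(2m,m,i)$. Two ingredients are needed: first, that $\Aut(\mathbf{QJ}(2m,m,i))$ is precisely this copy of $\Sym(2m)$ under the hypotheses $m\ge 3$ and $i\ne\lfloor m/2\rfloor$ (equivalently, that the graph admits no exceptional automorphisms); and second, the value of the minimal degree of $\Sym(2m)$ in this primitive action.

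For the first ingredient I would quote the known determination of the automorphism groups of these squashed (folded) Johnson graphs, in the same spirit as the citation \cite[Theorem~2~$(a)$]{Jones2005} used for the genuine Johnson graphs, checking by hand the finitely many small $m$ if the statement in the literature requires it. By Lemma~\ref{lem:orbSq}, $\Alt(2m)$ and $\Sym(2m)$ have the same orbital digraphs here, so it suffices to rule out automorphisms lying outside $\Sym(2m)$. The point of the excluded value $i=\lfloor m/2\rfloor$ (and of the already-excluded $i=0$, which yields the loop graph) is that there the orbital graph may degenerate or acquire extra symmetry, whereas outside these cases the graph determines the group. An alternative, should the literature not address $\mathbf{QJ}(2m,m,i)$ directly, is to lift automorphisms along the regular double cover $\mathbf{J}(2m,m,i)\to\mathbf{QJ}(2m,m,i)$ noted above, recovering $\Aut(\mathbf{QJ}(2m,m,i))$ from the known $\Aut(\mathbf{J}(2m,m,i))$ together with its unique (central) covering transformation.

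For the second ingredient, the minimal degree of $\Sym(2m)$ in this action is attained by a transposition $\tau=(a\,b)$; this can be cited from \cite{GuralnickMagaard}, exactly as in Lemma~\ref{lem:fixJohn}, or checked directly by verifying that no product of disjoint transpositions (nor a $3$-cycle, etc.) has smaller support. A partition $\{X_1,X_2\}$ is fixed by $\tau$ if and only if $a$ and $b$ lie in the same part: the only alternative, $X_1^\tau=X_2$, is impossible because $\tau$ fixes the remaining $2m-2\ge 4$ points. Hence $\tau$ fixes exactly $\binom{2m-2}{m-2}$ vertices, so
\[\mu\bigl(\Sym(2m)\bigr) = \tfrac12\binom{2m}{m} - \binom{2m-2}{m-2},\]
and therefore
\[\RelFix\bigl(\mathbf{QJ}(2m,m,i)\bigr) = \frac{\binom{2m-2}{m-2}}{\tfrac12\binom{2m}{m}} = \frac{m(m-1)}{m(2m-1)} = \frac{m-1}{2m-1} = \frac12\left(1-\frac{1}{2m-1}\right),\]
which is the value recorded in Theorem~\ref{thm:main}$(ii)(b)$.

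The counting is entirely routine; the only genuine obstacle is the first ingredient, namely pinning down the full automorphism group and, with it, correctly delimiting the exceptional cases hidden behind the hypotheses $i\ne\lfloor m/2\rfloor$ and $m\ge 3$ (resp.\ $m\ge 4$ in Theorem~\ref{thm:main}). Once $\Aut(\mathbf{QJ}(2m,m,i))=\Sym(2m)$ is secured, the transposition computation closes the argument.
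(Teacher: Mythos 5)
Your proposal follows essentially the same route as the paper: the paper also determines $\Aut(\mathbf{QJ}(2m,m,i))=\Sym(2m)$ by descending from $\Aut(\mathbf{J}(2m,m,i))=\Sym(2m)\times\Sym(2)$ along the regular double cover (citing Jones), and then identifies the minimal degree of $\Sym(2m)$ acting with stabilizer $\Sym(m)\wr C_2$; the only difference is that the paper quotes the closed-form value of $\mu$ from Burness--Guralnick, whereas you count the partitions fixed by a transposition directly (your count $\binom{2m-2}{m-2}$ out of $\tfrac12\binom{2m}{m}$ is correct and yields the same value $\tfrac12\bigl(1-\tfrac{1}{2m-1}\bigr)$). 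You are also right that the displayed formula in the lemma statement is a typo carried over from Lemma~\ref{lem:fixJohn}; the correct value is the one you obtain, matching Theorem~\ref{thm:main}$(ii)(b)$ and the paper's own proof.
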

\begin{proof}
	Consider $\mathbf{J}(2m,m,i)$, the regular double covering of $\mathbf{QJ}(2m,m,i)$.
	In view of \cite[Theorem~2~$(e)$]{Jones2005}, the automorphism group of $\mathbf{J}(2m,m,i)$ is $\Sym(2m)\times \Sym(2)$, where the central involution swaps pairs
	disjoint vertices. It follows that the automorphism group of $\mathbf{QJ}(2m,m,i)$ is $\Sym(2m)$. Now, we can immediately verify that the stabilizer of the vertex $\{\{1,2,\ldots,m\},\{m+1,m+2,\ldots,2m\}\}$ is $\Sym(m)\wr C_2$.
	The minimal degree of the primitive action of $\Sym(2m)$ with stabilizer $\Sym(m)\wr C_2$ is
	\[\mu\left( \Sym(2m) \right) = \frac{1}{4}\left( 1 + \frac{1}{2m-1} \right) \frac{(2m)!}{m!^2}\]
	(see \cite[Theorem~4]{BurnessGuralnick}). Thus, we find that
	\[\RelFix(\mathbf{QJ}(2m,m,i)) = \frac{1}{2}\left( 1 - \frac{1}{2m-1} \right) \,. \qedhere\]
\end{proof}
\color{black}

\subsection{Strongly regular graphs} \label{sec:SRG} We list all the strongly regular graphs appearing as $\vG_1$ in Theorem~\ref{thm:main}~$(c)$. We divide them according to the socle $L$ of the almost simple group that acts on them. Further, the present enumeration corresponds to the one of the groups that act on these graphs as listed in (the soon to be enunciated) Theorem~\ref{thm:GurMag}~$(e)$.
\begin{enumerate}[$(i)$]
	\item $L=U_4(q)$, $q \in \{ 2, 3 \}$, acting on totally singular $2$-dimensional subspaces of the natural module, two vertices of $\Gamma$ are adjacent if there is a third $2$-dimensional subspace that intersect both vertices in a $1$-dimensional subspace (see \cite[Section~2.2.12]{SRG});
%
	\item $L=\Omega_{2m+1}(3), m\ge 2$, acting on the singular points of the natural module, two vertices of $\Gamma$ are adjacent if they are orthogonal (see \cite[Theorem~2.2.12]{SRG});

	\item $L=\Omega_{2m+1}(3), m\ge 2$, acting on the nonsingular points of the natural module, two vertices of $\Gamma$ are adjacent if the line that connects them is tangent to the quadric where the quadratic form vanishes (see \cite[Section~3.1.4]{SRG});

	\item $L=\mathrm{P}\Omega_{2m}^\varepsilon(2), \varepsilon\in \{+,-\}, m\ge 3$, acting on the singular points of the natural module, two vertices of $\Gamma$ are adjacent if they are orthogonal (see \cite[Theorem~2.2.12]{SRG});
	
	\item $L=\mathrm{P}\Omega_{2m}^\varepsilon(2), \varepsilon\in \{+,-\}, m\ge 2$, acting on the nonsingular points of the natural module, two vertices of $\Gamma$ are adjacent if they are orthogonal (see \cite[Section~3.1.2]{SRG});
	
	\item $L=\mathrm{P}\Omega_{2m}^+(3), m\ge 2$ acting on the nonsingular points of the natural module, two vertices of $\Gamma$ are adjacent if they are orthogonal (see \cite[Section~3.1.3]{SRG});
	
	\item $L=\mathrm{P}\Omega_{2m}^-(3), m\ge 3$ acting on the singular points of the natural module, two vertices of $\Gamma$ are adjacent if they are orthogonal (see \cite[Theorem~2.2.12]{SRG});
	
	\item $L=\mathrm{P}\Omega_{2m}^-(3), m\ge 2$  acting on the nonsingular points of the natural module, two vertices of $\Gamma$ are adjacent if they are orthogonal (see \cite[Section~3.1.3]{SRG}).
\end{enumerate}

Table~\ref{table} collects the usual parameters of a strongly regular graph, $(v,d,\lambda,\mu)$, and their relative fixity. Recall that $v$ is the number of vertices, $d$ is the valency of the graph, $\lambda$ is the number of common neighbours between two adjacent vertices, and $\mu$ is the number of common neighbours between two nonadjacent vertices.
As $\mu(G)$ can be found in \cite[Theorem~4]{BurnessGuralnick}, the relative fixity is computed as
\[\RelFix(\vG) = 1 - \frac{\mu(G)}{v} \,,\]

\begin{scriptsize}
	\begin{table}[t]
	\begin{center}
	\rowcolors{2}{white}{gray!25}
		\begin{tabular}{| c c | c c c c c | c |}
			\hline
			\phantom{$\dfrac{1^1}{1_{1_1}}$}\hspace{-6mm}
			& Socle & $v$ & $d$ & $\lambda$ & $\mu$ & $\RelFix$ & Comments\\
			\hline
			
			\phantom{$\dfrac{1^1}{1_{1_1}}$}\hspace{-6mm}
			$(i)$ & $U_4(2)$ & $27$ & $10$ & $1$ & $5$ & $\dfrac{7}{27}$ & \\
			
			\phantom{$\dfrac{1^1}{1_{1_1}}$}\hspace{-6mm}
			& $U_4(3)$ & $112$ & $30$ & $2$ & $10$ & $\dfrac{11}{56}$ & \\
			
			\phantom{$\dfrac{1^1}{1_{1_1}}$}\hspace{-6mm}
			$(ii)$ & $\Omega_{2m+1}(3)$ & $\dfrac{1}{2}(9a - 1)$ & $\dfrac{3}{2}(a^2-1)$ &   $\dfrac{1}{2}(a^2-9) + 2$ &  $ \dfrac{1}{2}(a^2-1)$ & $\dfrac{a+1}{3a+1}$ & $a=3^{m-1}$ \\
			
			\phantom{$\dfrac{1^1}{1_{1_1}}$}\hspace{-6mm}
			$(iii)$ & $\Omega_{2m+1}(3)$ & $\dfrac{3a}{2}(3a-1)$ & $(a-1)(3a+1)$ & $2(a^2-a-1)$ & $ 2a(a-1) $ & $\dfrac{3a^2+a+1}{3a(3a-1)}$ &  
			\\
			
			\phantom{$\dfrac{1^1}{1_{1_1}}$}\hspace{-6mm}
			$(iv)$ & $\mathrm{P}\Omega_{2m}^+(2)$ & $(4b-1)(2b-1)$ & $2(2b-1)(b+1)$ & $(2b-2)(b-2)+1$ & $(2b-1)(b+1)$ & $\dfrac{b-1}{2b-1}$ & $b=2^{m-2}$\\
			
			\phantom{$\dfrac{1^1}{1_{1_1}}$}\hspace{-6mm}
			& $\mathrm{P}\Omega_{2m}^-(2)$ & $4b^2-1$ & $2(b^2-1)$ & $b^2-3$ & $ b^2-1 $ & $\dfrac{2b+1}{4b+1}$ & 
			\\
			
			\phantom{$\dfrac{1^1}{1_{1_1}}$}\hspace{-6mm}
			$(v)$ & $\mathrm{P}\Omega_{2m}^\varepsilon(2)$ & $2b(4b-\varepsilon)$ & $4b^2-1$ & $2(b^2-1)$ & $ b(2b+\varepsilon) $ & $\dfrac{2b}{4b - \varepsilon}$ & $\varepsilon = \pm 1$ 
			\\
			
			\phantom{$\dfrac{1^1}{1_{1_1}}$}\hspace{-6mm}
			$(vi)$ & $\mathrm{P}\Omega_{2m}^+(3)$ & $
			\dfrac{3c}{2}(9c-1)$ & $\dfrac{3c}{2}(3c-1)$ & $\dfrac{c}{2}(3c-1)$ & $\dfrac{3c}{2}(c-1)$ & $\dfrac{3(c+1)}{9c - 1}$ & $c=3^{m-2}$\\
			
			\phantom{$\dfrac{1^1}{1_{1_1}}$}\hspace{-6mm}
			$(vii)$ & $\mathrm{P}\Omega_{2m}^-(3)$ & $\dfrac{1}{2}(9c^2-1)$ & $\dfrac{3}{2}(c^2-1)$ & $\dfrac{1}{2}(c^2-9)+2$ & $\dfrac{1}{2}(c^2-1)$ & $\dfrac{3c+1}{9c+1}$ & 
			\\
			
			\phantom{$\dfrac{1^1}{1_{1_1}}$}\hspace{-6mm}
			$(viii)$ & $\mathrm{P}\Omega_{2m}^-(3)$ & $\dfrac{3c}{2}(9c+1)$ & $\dfrac{3c}{2}(3c+1)$ & $\dfrac{c}{2}(3c-1)$ & $\dfrac{3c}{2}(c+1)$ & $\dfrac{9c^2+3c-2}{3c(9c+1)}$ & 
			\\
			
			\hline
		\end{tabular}
	\end{center}
	{\caption{Parameters of strongly regular graphs with large fixity.}
	\label{table}}
	\end{table}
\end{scriptsize}

\section{Proof of Theorem~\ref{thm:main}}
\label{sec:proof}
The primitive permutation groups we are concerned with were classified by T.~Burness and R.~Guralnick in \cite{BurnessGuralnick}. We report their result here. For the sake of our proof, we explicitly write the permutational rank of the almost simple groups of Lie type. This information can be easily obtained combining the complete list of $2$-transitive finite permutation groups, first described by P.~J.~Cameron in \cite[Section~5]{Cameron2tr}, and the complete list of classical finite permutation groups of permutational rank $3$, compiled by W.~M.~Kantor and R.~A.~Liebler in \cite[Theorem~1.1]{KantorLieber}.

\begin{thm}[\cite{BurnessGuralnick}, Theorem~4]
	\label{thm:GurMag}
	Let $G$ be a permutation group with
	\[\mu(G) < \frac{2n}{3} \,.\]
	Then one of the following holds:
	\begin{enumerate}[$(a)$]
		\item $\Alt(m) \le G \le \Sym(m)$, for some $m\ge 3$, in its action on $k$-subsets, for some $k< m/2$;
		\item $G=\Sym(2m)$, for some $m\ge 2$, in its primitive action with stabilizer $G_\alpha = \Sym(m)\wr C_2$;
		\item $G = M_{22}:2$ in its primitive action of degree $22$ with stabilizer $G_\alpha = \mathrm{L}_3(4).2_2$;
		\item $G$ is an almost simple group of socle $L$ and permutational rank $2$, and one of the following occurs:
		\begin{enumerate}[$(i)$]
			\item $L=\mathrm{L}_m(2)$, $m\ge 3$, in its natural action;
			\item $L=\mathrm{L}_m(3)$, $m\ge 3$, in its natural action, and $G$ contains an element of the form $(-I_{n-1},I_1)$;
			\item $L=\mathrm{Sp}_{2m}(2)$, $m\ge 3$, in its action on the singular points of the natural module;
			\item $L=\mathrm{Sp}_{2m}(2)$, $m\ge 3$, in its action on the right cosets of $\SOminus(2)$;
			\item $L=\mathrm{Sp}_{2m}(2)$, $m\ge 3$, in its action on the right cosets of $\SOplus(2)$;
		\end{enumerate}
		\item $G$ is an almost simple group of socle $L$ and permutational rank $3$, and one of the following occurs:
		\begin{enumerate}[$(i)$]
			\item $L=U_4(q)$, $q\in\{2,3\}$, in its primitive action on totally singular $2$-dimensional subspaces, and $G$ contains the graph automorphism $\tau$;
			\item $L=\Omega_{2m+1}(3)$ in its action on the singular points of the natural module, and $G$ contains an element of the form $(-I_{2m},I_1)$ with a $+$-type $(-1)$-eigenspace;
			\item $L=\Omega_{2m+1}(3)$ in its action on the nonsingular points of the natural module whose orthogonal complement is an orthogonal space of $-$-type, and $G$ contains an element of the form $(-I_{2m},I_1)$ with a $-$-type $(-1)$-eigenspace;
			\item $L=\mathrm{P}\Omega_{2m}^\varepsilon(2)$, $\varepsilon\in \{+,-\}$, in its action on the singular points on the natural module, and $G=\mathrm{SO}_{2m}^\varepsilon(2)$;
			\item $L=\mathrm{P}\Omega_{2m}^\varepsilon(2)$, $\varepsilon\in \{+,-\}$, in its action on the nonsingular points on the natural module, and $G=\mathrm{SO}_{2m}^\varepsilon(2)$;
			\item $L=\mathrm{P}\Omega_{2m}^+(3)$ in its action on the nonsingular points on the natural module, and $G$ contains an element of the form $(-I_{2m-1},I_1)$ such that the discriminant of the $1$-dimensional $1$-eigenspace is a nonsquare;
			\item $L=\mathrm{P}\Omega_{2m}^-(3)$ in its action on the singular points on the natural module, and $G$ contains an element of the form $(-I_{2m-1},I_1)$;
			\item $L=\mathrm{P}\Omega_{2m}^-(3)$ in its action on the nonsingular points on the natural module, and $G$ contains an element of the form $(-I_{2m-1},I_1)$ such that the discriminant of the $1$-dimensional $1$-eigenspace is a square;
		\end{enumerate}
		\item $G\le K \wr \Sym(r)$ is a primitive group of product action type, where $K$ is a permutation group appearing in points $(a)-(e)$, the wreath product is endowed with the product action, and $r\ge 2$;
		\item $G$ is an affine group with a  regular normal socle $N$, which is an elementary abelian $2$-subgroup.
	\end{enumerate}
\end{thm}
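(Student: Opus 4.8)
The plan is to run through the O'Nan--Scott classification of finite primitive permutation groups $G$ on $\Omega$ (writing $n=|\Omega|$), showing that in all but three of the types every nontrivial element moves more than $\tfrac{2n}{3}$ points, and treating the surviving types by hand. If $G$ is of (simple or compound) diagonal type with socle $T^k$, $k\ge2$, then $n=|T|^{k-1}$ and the point stabiliser contains a full diagonal subgroup $\cong T$; the fixed-point set on $\Omega$ of a nontrivial socle element is governed by a centraliser $C_T(t)$ with $t\ne1$, which is small relative to $|T|$ (Brauer--Fowler type bounds, or standard fixed-point-ratio estimates), so $\mu(G)>\tfrac{2n}{3}$, and elements outside the socle only enlarge the support. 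The twisted wreath case is analogous, or is absorbed by the product-action estimate below. Hence we may assume $G$ is of affine, almost simple, or product action type.

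\textbf{Affine type, case $(g)$.} Write $G=V\rtimes G_0$ with $V=\mathbb{F}_p^d$ the regular normal socle and $G_0\le\mathrm{GL}(d,p)$. A nontrivial translation is fixed-point-free, and for $g\in G_0\setminus\{1\}$ and $v\in V$ the fixed set of $vg$ is empty or a coset of $\ker(g-1)$, a space of dimension at most $d-1$; hence every nontrivial element of $G$ moves at least $p^d-p^{d-1}=(1-\tfrac1p)n$ points. Since $1-\tfrac1p\ge\tfrac23$ for $p\ge3$, the hypothesis $\mu(G)<\tfrac{2n}{3}$ forces $p=2$, which is case $(g)$.

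\textbf{Product action type, case $(f)$.} Here $G\le K\wr\Sym(r)$ acting on $\Delta^r$ with $r\ge2$ and $|\Delta|=m$; by the structure of product action type one may take $K$ primitive of almost simple or diagonal type, and the latter is excluded by the bound above, so $K$ is almost simple and $m\ge5$. For $g=(g_1,\dots,g_r)h$ with $h\ne1$, counting the fixed tuples cycle by cycle of $h$ (exactly as in the proof of Lemma~\ref{lem:relFixHam}) gives $|\Fix(g)|\le m^{r-1}$, so $|\supp(g)|\ge(1-\tfrac1m)n\ge\tfrac{2n}{3}$. For $g\in K^r\setminus\{1\}$ one has $|\Fix(g)|=\prod_i|\Fix_\Delta(g_i)|$, and since some coordinate $g_{i_0}\ne1$ contributes a factor at most $m-\mu(K)$ while the rest contribute at most $m$, it follows that $|\supp(g)|\ge\mu(K)m^{r-1}$. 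Therefore $\mu(G)\ge\min\{\mu(K)m^{r-1},(1-\tfrac1m)m^r\}$, so $\mu(G)<\tfrac{2n}{3}$ forces $\mu(K)<\tfrac{2m}{3}$; the analysis of the almost simple case then places $K$ among the groups of $(a)$--$(e)$, which is case $(f)$.

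\textbf{Almost simple type --- the main obstacle.} Here $G$ is almost simple with socle $L$, and one must decide, for \emph{every} primitive action, whether $\mu(G)<\tfrac{2n}{3}$; this is the bulk of the proof and uses the classification of finite simple groups in full. For $L=\Alt(m)$, Jordan-type counting on the actions on $k$-subsets and on partitions into two equal parts produces cases $(a)$ and $(b)$ --- with a transposition, of support $2\binom{m-2}{k-1}$ (resp.\ its induced action on partitions), realising $\mu(G)$ --- while every other primitive action of $\Alt(m)$ or $\Sym(m)$ has $\mu(G)$ within $o(n)$ of $n$. For $L$ sporadic, a finite check of the primitive actions isolates case $(c)$. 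The hard case is $L$ of Lie type: one first uses the bound $\mu(G)\ge\tfrac{n}{3}$ of Liebeck--Saxl~\cite{LiebeckSaxl} to restrict the candidate actions, then shows that outside a short list of small-rank subspace actions one in fact has $\mu(G)\ge\tfrac{2n}{3}$, and finally computes $\mu(G)$ precisely for those actions --- the economical elements being transvections, (pseudo)reflections, and elements of shape $(-I_j,I_{d-j})$, whose fixed-point ratios are read off from the module geometry as in~\cite{GuralnickMagaard}. This last computation is what forces the fine side-conditions appearing in $(d)$ and $(e)$ (that $G$ contain a prescribed graph automorphism, or an element $(-I_{d-1},I_1)$ with a $1$-eigenspace of prescribed discriminant, and so on), and I expect it to be by far the longest and most delicate part of the argument; in the present paper the entire statement is instead quoted from~\cite{BurnessGuralnick}.
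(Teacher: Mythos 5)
The paper does not prove this statement: it is imported verbatim (as the attribution in the theorem header indicates) from \cite{BurnessGuralnick}, with the permutational-rank annotations in parts $(d)$ and $(e)$ read off from Cameron's list of $2$-transitive groups and Kantor--Liebler's list of rank-$3$ classical groups. So there is no in-paper proof to compare against, and the honest answer is that your proposal is an outline of the proof of the cited theorem rather than something the authors attempt. That said, your reduction steps are sound and do match the standard strategy underlying \cite{BurnessGuralnick} (and its precursors \cite{GuralnickMagaard,LiebeckSaxl}): the diagonal and twisted wreath types are eliminated by fixed-point-ratio bounds; the affine computation $|\supp(vg)|\ge p^d-p^{d-1}=(1-\tfrac1p)n$ correctly forces $p=2$ (note $p=3$ gives exactly $\tfrac{2n}{3}$, excluded by the strict inequality); and the product-action estimate $\mu(G)\ge\min\{\mu(K)m^{r-1},(1-\tfrac1m)m^r\}$ correctly reduces case $(f)$ to $\mu(K)<\tfrac{2m}{3}$ for the almost simple component $K$, since $m\ge 5$ gives $1-\tfrac1m>\tfrac23$.

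The genuine gap is exactly where you flag it: the almost simple case. The entire content of the theorem is the \emph{precise} list in $(a)$--$(e)$, including which actions of $\Alt(m)$ and $\Sym(m)$ survive, the single sporadic example $M_{22}{:}2$, and above all the delicate side conditions in $(d)$ and $(e)$ (that $G$ contain a prescribed graph automorphism or an element of shape $(-I_{d-1},I_1)$ with a $1$-eigenspace of prescribed type or discriminant). Your sketch names the right economical elements (transvections, reflections, $(-I_j,I_{d-j})$) and the right tools (Liebeck--Saxl's $\tfrac{n}{3}$ bound to restrict to subspace-like actions, then exact fixed-point-ratio computations), but it derives none of the listed conditions; each subcase requires a separate computation over every family of maximal subgroups of every family of almost simple groups, which is the bulk of \cite{BurnessGuralnick}. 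As a self-contained proof the proposal therefore does not close; as a description of why the citation is believable and how it is organised, it is accurate.
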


\begin{proof}[Proof of Theorem~\ref{thm:main}]
The proof is split in two independent chunks. 
First, we prove that every vertex-primitive digraph of relative fixity exceeding $\frac{1}{3}$ belongs to one of the families appearing in Theorem~\ref{thm:main}. 
Then, we tackle the problem of computing the relative fixities of the graphs appearing in Theorem~\ref{thm:main}, thus showing that they indeed all have relative fixity larger than $\frac{1}{3}$.

Assume that $\vG$ is a digraph on $n$ vertices with at least one arc and with $\RelFix(\vG) > \frac{1}{3}$ such that $G=\Aut(\vG)$ is primitive.
If $\vG$ is disconnected, then the primitivity of $G$ implies that $\vG \cong \bL_n$. Hence we may assume that $\vG$ is connected.
Moreover, $\RelFix(\vG) > \frac{1}{3}$ implies that $\mu(G) < \frac{2n}{3}$. Hence $G$ is one of the groups 
determined in \cite{BurnessGuralnick} and described in Theorem~\ref{thm:GurMag}.

Suppose that $G$ is an almost simple group. Then $G$ is one of the groups appearing in parts $(a)-(e)$ of Theorem~\ref{thm:GurMag}. Since any $G$-vertex-primitive group is a union of orbital digraphs for $G$, the digraphs arising from these cases will be merged product action digraphs $\MP(1,\mG,\mJ)$ (see Remark~\ref{rem:ex}).
Hence, our goal is to consider these almost simple groups in turn and compile their list of orbitals digraphs $\mG$.

Let $G$ be a group as described in Theorem~\ref{thm:GurMag}~$(a)$. Lemma~\ref{lem:orbJohn} states the orbital digraphs for $G$ are the distance-$i$ Johnson graph $\mathbf{J}(m,k,i)$.

Assume that $k=1$, that is, consider the natural action of either $\Alt(m)$ or $\Sym(m)$ of degree $m$. Since this action is $2$-transitive, their set of orbital digraphs is $\mG=\{\bL_m,\bK_m\}$. In particular, $\MP(1,\mG,\mJ)=\mathbf{H}(1,m,\mJ)$. This case exhausts the generalized Hamming graphs with $r=1$, which appear in Theorem~\ref{thm:main}~$(i)$. Therefore, in view of Remark~\ref{rem:isomHam}, for as long as we suppose $r=1$, we can also assume that $\mJ$ is a non-Hamming homogeneous set. Observe $m\ge 4$, otherwise, we go against our assumption on the relative fixity.

Going back to distance-$i$ Johnson graphs, to guarantee that $\mJ$ is non-Hamming, we have to take $k\ge 2$. Thus, 
\[\mG=\left\{\mathbf{J}(m,k,i) \mid i\in \{0,1,\ldots,k\}\right\} \,,\]
which corresponds to Theorem~\ref{thm:main}~$(ii)(a)$.

Let $G=\Sym(2m)$ be a permutation group from Theorem~\ref{thm:GurMag}~$(b)$. If $m=2$, the degree of $G$ is $3$, and the relative fixity of any action of degree $3$ can either be $0$ or $\frac{1}{3}$. Hence, we must suppose that $m\ge 3$: by Lemma~\ref{lem:orbSq}, the orbital digraphs for $G$ are the squashed distance-$i$ Johnson graph $\mathbf{QJ}(2m,m,i)$. We obtain that
\[\mG=\left\{\mathbf{QJ}(2m,m,i) \mid i\in \{0,1,\ldots,\lfloor m/2\rfloor\}\right\}\,,\]
as described in Theorem~\ref{thm:main}~$(ii)(b)$.

Let $G=M_{22}:2$ in the action described in Theorem~\ref{thm:GurMag}~$(c)$.
Consulting the list of all the primitive groups of degree $22$ in \textsc{Magma} \cite{MR1484478} (which is based on the list compiled in \cite{CouttsQuickRoney-Dougal}), we realize that they are all $2$-transitive. Hence, the set of orbital digraphs is $\mG = \{\bK_{22},\bL_{22}\}$. In particular, all the graphs are generalised Hamming graphs.

Let $G$ be an almost simple of Lie type appearing in Theorem~\ref{thm:GurMag}~$(d)$. Since all these groups are $2$-transitive with a $2$-transitive socle $L$, their orbital digraphs are either $\bK_m$ or $\bL_m$, where $m\ge 7$ is the degree of $G$. Once again, we obtain only generalise Hamming graphs.

Let $G$ be an almost simple of Lie type described in Theorem~\ref{thm:GurMag}~$(e)$. Any group of permutational rank $3$ defines two nondiagonal orbital digraphs, and, as such digraphs are arc-transitive and one the complement of the other, they are strongly regular digraphs (see, for instance, \cite[Section~1.1.5]{SRG}). The set of orbital digraphs is of the form $\mG=\{\bL_m,\vG_1,\vG_2\}$, where we listed the possible $\vG_1$ in Section~\ref{sec:SRG}, and where $m=|V\vG_1|$. The graphs described in this paragraph appear in Theorem~\ref{thm:main}~$(ii)(c)$.

We have exhausted the almost simple groups from Theorem~\ref{thm:GurMag}. Hence, we pass to Theorem~\ref{thm:GurMag}~$(f)$. Suppose that $G\le K \wr \Sym(r)$ is a primitive group of product action type. We want to apply Theorem~\ref{thm:suborbits} to $G$. The only hypothesis we miss is that $T$ and $G_\Delta^\Delta$ share the same set of orbital digraphs.

We claim that $T$ and $K$ induces the same set of orbital digraphs. If $K$ is either alternating or symmetric, the claim follows from Lemmas~\ref{lem:orbJohn} and~\ref{lem:orbSq}. If $K$ is $2$-transitive, then we can observe that its socle $L$ is also $2$-transitive: the socle of $M_{22}:2$ is $T=M_{22}$ in its natural $3$-transitive action, while the socle $T$ of the almost simple groups of Lie type of rank $2$ is $2$-transitive by \cite[Section~5]{Cameron2tr}. In particular, $K$ and $T$ both have $\mG=\{\bL_m,\bK_m\}$ as their set of orbital graphs. Finally, suppose that $K$ is an almost simple group of permutational rank $3$. We have that its socle $T$ is also of permutational rank $3$ by \cite[Theorem~1.1]{KantorLieber}. Observe that, since any orbital digraph for $T$ is a subgraph of an orbital digraph for $G$, the fact that $G$ and $L$ both have permutational rank $3$ implies that they share the same set of orbital digraphs. Therefore, the claim is true.

By our claim together with the double inclusion
\[T \le G_\Delta^\Delta \le K \,,\]
we obtain that $T, G_\Delta^\Delta$ and $K$ all induce the same set of orbital digraphs. Therefore, we can apply Theorem~\ref{thm:suborbits} to $G$: we obtain that $G$ shares its orbital graphs with $T \wr G^{\Omega}$.

Therefore, all the $G$-vertex-primitive digraphs are union of orbital digraphs for $T \wr H$, with $T$ socle type of $G$ and $H$ transitive permutation group isomorphic to $G^\Omega$. In other words, we found all the graphs $\MP(r,\mG,\mJ)$ with $r\ge 2$ described in Theorem~\ref{thm:main}. (Recall that, by Definition~\ref{de:genHam}, among the graphs $\MP(r,\mG,\mJ)$, we find all the generalised Hamming graphs.)

Suppose that $G$ is an affine group with a  regular normal socle $N$, which is an elementary abelian $2$-subgroup. We have that $G$ can be written as the split extension $N : H$, where $H$ is a group of matrices that acts irreducibly on $N$. It follows that $G$ is $2$-transitive on $N$, hence, as $|N|\ge 4$, the graphs arising in this scenario are $\bL_{|N|}, \bK_{|N|}$ and $\bL_{|N|}\cup\bK_{|N|}$, which are generalised Hamming graphs.

We have completed the first part of the proof, showing that the list of vertex-primitive digraphs appearing in Theorem~\ref{thm:main} is exhaustive. As all the orbital digraphs in $\mG$ are actually graphs, the same property is true for the graphs in the list, as we have underlined in Remark~\ref{rem:graph}.

We can now pass to the second part of the proof, that is, we can now tackle the computation of relative fixities. We already took care of the generalised Hamming graphs in Lemma~\ref{lem:Ham}. Thus, we can suppose that $\vG$ is a merged product action graph $\MP(r,\mG,\mJ)$ appearing in Theorem~\ref{thm:main}~$(ii)$.

Suppose that $r=1$, that is, $\vG$ is a union of graphs for some primitive almost simple group $K$. (We are tacitely assuming that $K$ is maximal among the groups appearing in a given part of Theorem~\ref{thm:GurMag}.) In view of \cite[Theorem]{LIEBECK1987365}, we have that $K$ is a maximal subgroup of either $\Alt(|V\vG|)$ or $\Sym(|V\vG|)$. Therefore, there are just two options for $\Aut(\vG)$: either it is isomorphic to $K$ or it contains $\Alt(|V\vG|)$. In the latter scenario, as $\Alt(|V\vG|)$ is $2$-transitive on the vertices, we obtain that $\vG \in \{\bL_m,\bK_m,\bL_m\cup\bK_m\}$.
All those graphs are generalised Hamming graphs, against our assumption on $\vG$. Therefore, we have $K=\Aut(\vG)$. In particular, the relative fixity for $\vG$ are computed in Lemma~\ref{lem:fixJohn}, Lemma~\ref{lem:fixSq} or Table~\ref{table} given that $\mG$ is described in Theorem~\ref{thm:main}~$(ii)(a)$, $(ii)(b)$ or~$(ii)(c)$ respectively.

Suppose now that $r\ge 2$. The automorphism group of $\vG$ either embeds into $\Sym(m)\wr \Sym(r)$, where $m=|V\vG_i|$ for any $\vG_i\in \mG$, or, by maximality of $\Sym(m)\wr \Sym(r)$, $\Aut(\vG) = \Sym(m^r)$. In the latter scenario, $\vG \in \{\bL_m,\bK_m,\bL_m\cup\bK_m\}$.
All these graphs can be written as a merged product graph where $r=1$ and $\mJ$ is a Hamming set. This goes against our assumption on $\vG$, thus we must suppose $\Aut(\vG) \ne \Sym(m^r)$.

As a consequence, we obtain that, for some almost simple group $K$ listed in Theorem~\ref{thm:GurMag}~$(a)-(e)$, and for some transitive group $H\le \Sym(r)$, $K \wr H \le \Aut(\vG)$. Note that, as $K \le \Aut(\vG)^\Delta_\Delta$, by \cite[Theorem]{LIEBECK1987365}, $\Aut(\vG)^\Delta_\Delta$ is either $K$ or it contains $\Alt(m)$. If the latter case occurs,
then $\Alt(m)^r\wr H \le \Aut(\vG)$. By Lemma~\ref{lem:Ham}, $\vG$ is a generalised Hamming graph, which contradicts our choice of $\vG$. Therefore, $\Aut(\vG) \le K \wr \Sym(r)$.

Observe that we can apply Lemma~\ref{lem:relFixHam}. We obtain that
\[\RelFix (\vG) = 1 - \frac{\mu(K)m^{r-1}}{m^r} = 1 - \frac{\mu(K)}{m} = \RelFix\left(\MP(1,\mG,\mJ')\right) \,,\]
for some non-Hamming homogeneous set $\mJ'$. In particular, the relative fixities for $r\ge 2$ coincides with those we have already computed for $r=1$.
This complete the proof.
\end{proof}

\section{Proof of Theorem~\ref{thm:mainX}}
\label{sec:X}
Recall that a permutation group $G$ on $\Omega$ is \emph{quasiprimitive} if all its normal subgroups are transitive on $\Omega$. Clearly, any primitive group is quasiprimitive. Moreover, recall that, by repeating the proof of Cauchy--Frobenius~Lemma (see~\cite[Theorem~1.7A]{DM}) on the conjugacy class of a permutation $x\in G$, we get
\[\fix(x) |x^G| = |x^G \cap G_\omega|\]
where $\fix(x) = |\Omega| - |\supp(x)|$ is the number of fixed points of $x$.

\begin{proof}[Proof of Theorem~\ref{thm:mainX}]
(We would like to thank P.~Spiga again for pointing out the key ingredients for this proof.)
	Let $G$ be a quasiprimitive permutation group on a set $\Omega$,
	and let $x\in G\setminus\{1\}$ be an element achieving $|\supp(x)|\le(1-\alpha)|\Omega|$.
	For any point $\omega\in \Omega$, we obtain
	\[\alpha \le \frac{|x^G \cap G_\omega|}{|x^G|} \le \frac{|G_\omega|}{|x^G|} \le \frac{\beta}{|x^G|}\,.\]
        It follows that $|x^G|\le \alpha^{-1}\beta$.
	Now consider the normal subgroup of $G$ defined by
	\[N:=\bigcap\limits_{g\in G} \C_G(x^g)\,.\]
	Recall that $|G:\C_G(x)|=|x^G|$.
	Observe that $G$ acts by conjugation on the set
	\[\{\C_G(x^g) \mid g\in G\} \,,\]
	it defines a single orbit of  size $|x^G|$, and $N$ is the kernel of this action.
	Therefore
	\[|G:N| \le |x^G|! \le \left\lceil \frac{\beta}{\alpha} \right\rceil ! \,,\]
	that is, $N$ is a bounded index subgroup of $G$.
	Since $G$ is quasiprimitive, either $N$ is trivial or $N$ is transitive.
	Aiming for a contradiction, we suppose that $N$ is transitive.
	Since $[N,x]=1$, for any $\omega\in \Omega$ and for any $n\in N$,
	\[\omega^{nx}=\omega^{xn}=\omega^n \,.\]
	The transitivity of $N$ implies that $x = 1$, against our choice of $x$.
	Therefore, $N$ is trivial. It follows that
	\[|G| = |G:N| \le \left\lceil \frac{\beta}{\alpha} \right\rceil!\,.\]
	Since there are finitely many abstract groups of bounded size, the proof is complete.
\end{proof}

An equivalent formulation of Sims' Conjecture states that if $G$ is a primitive permutation group and the minimal out-valency among its nondiagonal orbital digraphs is at most $d$, then the size of a point stabilizer is bounded from above by a function $\f(d)$ depending only on the positive integer $d$.  An answer in the positive to this conjecture was given in \cite{CameronPraegerSaxlSeitz1983}.

\begin{proof}[Proof of Corollary~\ref{cor:graphs1}]
	Let $\vG$ be a vertex-primitive digraphs of out-valency at most $d$ and relative fixity exceeding $\alpha$, and let $G=\Aut(\vG)$. The hypothesis on the out-valency implies that, for any $v\in V\vG$, $|G_v|\le \f(d)$, where $\f(d)$ is the function that solves Sims' Conjecture. The result thus follows by choosing $\beta=\f(d)$ in Theorem~\ref{thm:mainX}.
\end{proof}

We conclude the paper by observing that, as $\f(d)\ge (d-1)!$, from Corollary~\ref{cor:graphs1} we cannot obtain a bound as sharp as that in Remark~\ref{cor:growth}.

\bibliographystyle{plain}
\bibliography{bibMinDeg}
\end{document}